\documentclass[12pt,leqno]{amsart}
\usepackage{amsfonts,amsthm,amsmath,mathtools,amssymb,comment,xcolor,enumitem}
%%%%%%%%%%%

\theoremstyle{plain}
\newtheorem{thm}{Theorem}[section]

\newtheorem{cor}{Corollary}[section]

\theoremstyle{definition}
\newtheorem{df}{Definition}[section]
\newtheorem{rem}{Remark}[section]

\newtheorem{conj}{Conjecture}[section]

%%%%%%%%%%%%
\newcommand{\FF}{\mathbb{F}}
\newcommand{\ZZ}{\mathbb{Z}}

\newcommand{\cwe}{\mathbf{cwe}}

\newcommand{\R}{\mathfrak{R}}

\newcommand{\Jac}{\mathfrak{Jac}}
\newcommand{\J}{\mathcal{J}}

\newcommand{\D}{\mathcal{D}}

\newcommand{\II}{\mathrm{II}}

\def\bm#1{\mathbf{#1}}

%%%%%%%%%%
%%%%%%%%%%%
\DeclareMathOperator{\supp}{supp}
\DeclareMathOperator{\comp}{comp}

\DeclareMathOperator{\wt}{wt}

%%%%%%%%%%
%%\renewcommand{\baselinestretch}{1.1}

\begin{document}

\title[Variants of Jacobi Polynomials]{Variants of Jacobi Polynomials in coding theory}

\author[Chakraborty]{Himadri Shekhar Chakraborty*}
\thanks{*Corresponding author}
\address
	{
		(1) Graduate School of Natural Science and Technology\\
		Kanazawa University\\  
		Ishikawa 920-1192, Japan\\
		(2) Department of Mathematics, Shahjalal University of Science and Technology\\ Sylhet-3114, Bangladesh\\
	}
\email{himadri-mat@sust.edu}

\author[Miezaki]{Tsuyoshi Miezaki}
\address
	{
		Faculty of Science and Engineering, 
		Waseda University, 
		Tokyo 169-8555, Japan\\ %(Corresponding author)
	}
\email{miezaki@waseda.jp}

\date{\today}

\begin{abstract}
	In this paper, 
	we introduce the notion of 
	the complete joint Jacobi polynomial 
	of two linear codes of length~$n$ 
	over $\FF_{q}$ and $\ZZ_{k}$. 
	We give the MacWilliams type identity for 
	the complete joint Jacobi polynomials of codes.
	We also introduce 
	the concepts of the average Jacobi polynomial 
	and the average complete joint Jacobi polynomial
	over $\FF_{q}$ and $\ZZ_{k}$.
	We give a representation of 
	the average of 
	the complete joint Jacobi polynomials 
	of two linear codes of length~$n$ 
	over $\FF_{q}$ and $\ZZ_{k}$ 
	in terms of the compositions of~$n$
	and its distribution in the codes.
	Further we present a generalization of the representation 
	for the average of the $(g+1)$-fold complete joint Jacobi polynomials of codes
	over~$\FF_{q}$ and $\ZZ_{k}$.
	Finally, we give the notion of 
	the average Jacobi intersection number
	of two codes.  
\end{abstract}

\subjclass[2010]{Primary: 11T71; Secondary: 94B05, 11F11}
\keywords{Codes, weight enumerators, Jacobi polynomial.}

\maketitle

\section{Introduction}\label{SecIntroduction}

In~\cite{MMC1972}, MacWilliams, Mallows and~Conway provided 
the notion of the joint weight enumerator of two 
$\FF_{q}$-linear codes.
The joint weight enumerators of codes were studied by 
Dougherty, Harada and~Oura~\cite{DHO} 
over the finite ring $\ZZ_{k}$ of integers modulo ~$k ~(k \geq 2)$.
Furthermore, the average of the joint weight enumerators of 
two binary codes were introduced by Yoshida~\cite{Y1989}.
Consecutively, Yoshida~\cite{Y1991} defined 
the average intersection number of two binary codes.
In~\cite{CMxxxx}, Chakraborty and~Miezaki studied
the average of the complete joint weight enumerators
of codes over $\FF_{q}$ and $\ZZ_{k}$
and defined the
average complete joint weight enumerators
as a generalization of the average joint weight enumerators in~\cite{Y1989}.
They also studied the average intersection number of two 
codes over $\FF_{q}$ and $\ZZ_{k}$.
The notion of the Jacobi polynomial of a code over $\FF_{q}$ 
was introduced by Ozeki~\cite{Ozeki1997}. 
A successive study of the Jacobi polynomial of a 
binary code was carried out by 
Bonnecaze, Mourrain and Sol\'e~\cite{BMS1999}
to construct various kinds of designs. 
The concept of the $g$-th Jacobi polynomial 
of a binary code was given by Honma, Okabe and~Oura~\cite{HOO2020}.
They also obtained the MacWilliams identity 
for the $g$-th Jacobi polynomials.
In the present paper, 
we give the notion of the complete joint Jacobi polynomials
of codes over $\FF_{q}$ and $\ZZ_{k}$ and 
obtain the MacWilliams type identity for the polynomials.
We also define the average Jacobi polynomial and 
the average complete joint Jacobi polynomial of codes 
over $\FF_{q}$ and $\ZZ_{k}$, 
and give an analogue of 
%the main theorem 
the main theorem
in~\cite{Y1989}
for each of the polynomial.
Moreover, as a generalization of the complete joint Jacobi polynomials 
and the average complete joint Jacobi polynomials,
we introduce the concept of
the~$g$-fold complete joint Jacobi polynomials and 
the average $(g+1)$-fold complete joint Jacobi polynomials,
respectively
of codes over~$\FF_{q}$ and~$\ZZ_{k}$.
Finally, we define the average Jacobi intersection number
and give a formula to compute the numbers. 
We also give some numerical examples of the 
average Jacobi intersection number for Type~$\II$ codes. 

Throughout this paper, 
we assume that $\mathfrak{R}$ denotes 
either the finite field $\FF_{q}$ of order $q$, 
where $q$ is a prime power 
or the ring $\ZZ_{k}$ of integers modulo~$k$ 
for some positive integer $k \ge 2$.

This paper is organized as follows.
In Section~\ref{Sec:Preli},
we give definitions and some basic properties
of linear codes 
over~$\R$. 
%$\FF_{q}$ and $\ZZ_{k}$.
In Section~\ref{Sec:MacWilliams},
we give the MacWilliams type identity (Theorem~\ref{ThMacWilliams}) 
for the complete joint Jacobi polynomials of codes over~$\R$.
In Section~\ref{Sec:MainResults},
%we define 
we give an analogue to the main theorem in~\cite{Y1989} 
for the average Jacobi polynomials (Theorem~\ref{ThAverageJacobi})
as well as for the average complete joint Jacobi polynomials (Theorem~\ref{ThAvJointJacobi}) 
over~$\R$.
In Section~\ref{Sec:AvHigherGenJacobi},
we give a generalization of the MacWilliams identity 
(Theorem~\ref{Thm:GenMacWilliams})
for the $g$-fold complete joint Jacobi polynomials of
codes over~$\R$. 
We also give a generalization of Theorem~\ref{ThAvJointJacobi}
for the average $(g+1)$-fold complete joint Jacobi polynomials
of codes over~$\R$ (Theorem~\ref{Thm:GenJacYoshida}).
In Section~\ref{Sec:AvJacobiInterNum},
we define the average Jacobi intersection number
and give a formula (Theorem~\ref{ThAvJacobiInterNum}) 
to compute this number. 
We also give some numerical examples
%of the number 
for some Type~$\II$ codes over $\FF_{2}$.
From the observation of the numerical examples, 
we enclose the section with 
%we conclude 
two conjectures 
(Conjecture~\ref{ConjAsym}, Conjecture~\ref{Conjdesign}).

\section{Preliminaries}\label{Sec:Preli}

%Let $\FF_{q}^{n}$ be the vector space of dimension $n$ over $\FF_{q}$.
An $\FF_q$-linear code of length~$n$ is a vector subspace of $\FF_{q}^{n}$,
and a $\ZZ_{k}$-linear code of length~$n$ is an additive group of $\ZZ_{k}^{n}$. 
The elements of an~$\R$-linear code are called \emph{codewords}. 
%Let $\bm{u} = (u_1,u_2,\dots, u_n)$ be a codeword of an 
%$\FF_{q}$-linear code $C$, 
%then the Hamming weight 
%$\wt(\bm{u})$ of the codeword $\bm{u}$ 
%is defined to be the number of $i$'s such that $u_{i} \neq 0$. 
Let $\bm{u} = (u_1,u_2,\dots, u_n)$ 
and $\bm{v} = (v_1,v_2,\dots,v_n)$ 
be the elements of $\R^{n}$. 
Then the \emph{inner product} of two elements 
$\bm{u},\bm{v} \in \R^{n}$ 
is given by
\[
	\bm{u} \cdot \bm{v} 
	:= 
	u_{1} v_{1} + u_{2} v_{2} + \dots + u_{n}v_{n}.
\]
%If $\bm{u} \cdot \bm{v} = \bm{0}$, 
%we call $\bm{u}$ and $\bm{v}$ are \emph{orthogonal}. 
%An element $\bm{u}$ is called \emph{self-orthogonal} 
%if $\bm{u} \cdot \bm{u} = \bm{0}$. 
The \emph{dual code} of an $\R$-linear code~$C$ 
of length~$n$ is defined by
\[
	C^\perp 
	:= 
	\{\bm{v}\in \R^{n} 
	\mid \bm{u} \cdot \bm{v} = \bm{0} \text{ for all } \bm{u}\in C\}. 
\]
An $\R$-linear code $C$ is called \emph{self-dual}
if $C = C^\perp$.
The \emph{weight} of $\bm{u} \in \R^{n}$ is denoted 
by~$\wt(\bm{u})$ and
defined to be the number of $i$'s such that $u_{i} \neq 0$. 
A self-dual code over $\FF_{2}$ of 
length~$n\equiv 0 \pmod 8$ is called \emph{Type~$\II$}
if the weight of each codeword of the code is a multiple of~$4$.
%It is well known that the length of a self-dual
%code is even and the dimension is $n/2$.

%A $\ZZ_k$-linear code~$C$ of length~$n$ 
%is an additive subgroup of $\ZZ_{k}^n$. 
%The \emph{inner product} of two elements 
%$\bm{u},\bm{v}\in \ZZ_{k}^n$ is defined as:
%\[
%	\bm{u} \cdot \bm{v} 
%	:= 
%	u_1v_1 + u_2v_2 + \dots + u_nv_n\pmod k,
%\]
%where $\bm{u} = (u_1,u_2,\dots, u_n)$ 
%and $\bm{v} = (v_1,v_2,\dots,v_n)$. 
%If $\bm{u} \cdot \bm{v} = 0$, 
%we call $\bm{u}$ and $\bm{v}$ are \emph{orthogonal}. 
%An element $\bm{u}$ is called \emph{self-orthogonal} 
%if $\bm{u} \cdot \bm{u} = 0$.
%Now let $C$ be a $\ZZ_{k}$-\emph{linear code} of length $n$, 
%and its elements are called \emph{codewords}. 
%Then the \emph{dual code} of~$C$ is given by
%\[
%	C^\perp 
%	:= 
%	\{\bm{v}\in \ZZ_{k}^n 
%	\mid 
%	\bm{u} \cdot \bm{v} = 0 \text{ for all } \bm{u}\in C\}. 
%\]
%A $\ZZ_k$-linear code~$C$ is called \emph{self-orthogonal} if $C \subseteq C^\perp$ and %\emph{self-dual} if $C = C^\perp$.

Let the elements of $\R$ be
$0=\omega_{0},\omega_{1},\ldots,\omega_{|\R|-1}$
in some fixed order.
%Let $C$ be an $\mathfrak{R}$-linear code of length~$n$.
Then the \emph{composition} of an element $\bm{u}\in \R^{n}$ 
is defined as
\[
	\comp(\bm{u}) 
	:= \ell(\bm{u}) 
%	:= (\ell_{0}(\bm{u}),\ldots,\ell_{|\R|-1}(\bm{u}))
	:= (\ell_{a}(\bm{u}) : a \in \R),
\]
where 
%$\ell_{i}(\bm{u})$ 
$\ell_{a}(\bm{u})$
denotes 
the number of coordinates of $\bm{u}$ that are equal to 
%$\omega_{i}$
$a \in \R$.
Obviously,
\[
%	\sum_{i=0}^{|\R|-1} 
	\sum_{a \in \R}
	\ell_{a}(\bm{u}) = n.
\]
In general, 
a \emph{composition} $L$ of $n$ is a vector 
%$L = (L_{0},\ldots,L_{|\R|-1})$
with non-negative integer components 
$L_{a}$ for $a \in \R$
such that
\[
%	\sum_{i=0}^{|\R|-1} 
	\sum_{a \in \R}
	L_{a} 
	= 
	n.
\]

\begin{df}
	Let $C$ be an $\R$-linear code of length~$n$.
	We denote
	\[
		T_{L}^{C}
		:=
		\{
			\bm{u} \in C \mid 
%			\ell(\bm{u})
			\comp(\bm{u}) 
			= 
			L
		\}.
	\]
	Then the \emph{complete weight enumerator} for $C$ 
	is defined as
	\[
		\cwe_{C}(\{x_{a}\}_{a \in \R})
		:=
		\sum_{\bm{u} \in C}
		\prod_{a \in \R}
		x_{a}^{\ell_{a}(\bm{u})}
		=
		\sum_{L}
		A_{L}^{C}
		\prod_{a \in \R}
		x_{a}^{L_{a}},
	\]
	where $A_{L}^{C} = |T_{L}^{C}|$.
	In more general, the complete weight enumerator 
	of~$C$ for genus $g$
	is defined as
	\[
		\cwe_{C}^{(g)}(\{x_{a}\}_{a \in \R^{g}})
		:=
		\sum_{\bm{u}_{1},\ldots,\bm{u}_{g} \in C}
		\prod_{a \in \R^{g}}
		x_{a}^{n_{a}(\bm{u}_{1},\ldots,\bm{u}_{g})}
	\]
	where
	$n_{a}(\bm{u}_{1},\ldots,\bm{u}_{g})$ 
	denotes the number of $i$ 
	such that $ a = (u_{1i},\ldots,u_{gi})$.
\end{df}

Now fix $\bm{w} \in \R^{n}$.
We denote by
$\comp_{\bm{w}}(\bm{u}):= r(\bm{u};\bm{w})$
the \emph{Jacobi composition} 
of~$\bm{u}\in \R^{n}$ 
with respect to $\bm{w}$
having the components
$r_{a}(\bm{u};\bm{w})$
for 
$a \in \R^{2}$
which 
%The components of the Jacobi composition 
%are denoted by
%$r_{a}(\bm{u};\bm{w})$
are defined as
\[
	r_{a}(\bm{u};\bm{w}) 
	:= 
	\#\{i \mid (u_{i},w_{i}) = a\}.
\]
%for $a \in \R^{2}$.
Clearly
\[
	 \sum_{a\in\R^{2}} 
	 r_{a}(\bm{u};\bm{w}) 
	 = n.
\]
In general, a \emph{Jacobi composition} $R$ of $n$ is a vector 
with non-negative integer components 
$R_{a}$ for $a \in \R^{2}$
such that
\[
	\sum_{a \in \R^{2}} R_{a} = n.
\]

\begin{df}
	Let $C$ be an $\R$-linear code of length~$n$ 
	and $\bm{w} \in \R^{n}$.
	We denote
	\[
		T_{R}^{C,\bm{w}}
		:=
		\{
			\bm{u} \in C 
			\mid 
%			r(\bm{u};\bm{w}) 
			\comp_{\bm{w}}(\bm{u})
			= 
			R
		\}.
	\]
	Then the \emph{complete Jacobi polynomial} of $C$ 
	with respect to $\bm{w} \in \R^{n}$ 
	is defined as
	\[
		Jac(C,\bm{w};\{x_{a}\}_{a \in \R^{2}})
		:=
		\sum_{\bm{u} \in C}
		\prod_{a \in \R^{2}}
		x_{a}^{r_{a}(\bm{u};\bm{w})}
		=
		\sum_{R}
		B_{R}^{C,\bm{w}}
		\prod_{a \in \R^{2}}
		x_{a}^{R_{a}},
	\]
	where $B_{R}^{C,\bm{w}} = |T_{R}^{C,\bm{w}}|$.
\end{df}

\begin{df}
	Let $C$ and $D$ be two $\R$-linear codes of length~$n$. 
	Then the \emph{complete joint weight enumerator}
	of $C$ and $D$ is defined in~\cite{CMxxxx} as
	\[
		\J_{C,D}(\{x_{a}\}_{a \in \R^{2}})
		:=
		\sum_{\bm{u} \in C, \bm{v} \in D}
		\prod_{a \in \R^{2}}
		x_{a}^{n_{a}(\bm{u},\bm{v})},
	\]
	where $n_{a}(\bm{u},\bm{v}) := \#\{i \mid (u_{i},v_{i}) = a\}$.
\end{df}

\begin{rem}\label{Rem:JointWt}
	$\J_{C,D}(\{x_{a}\}_{a \in \R^{2}})
	=
	\sum_{\bm{v} \in D} 
	Jac(C,\bm{v};\{x_{a}\}_{a \in \R^{2}})$.
\end{rem}

\section{Complete Joint Jacobi Polynomial and MacWilliams Identity}
\label{Sec:MacWilliams}

Let us fix~$\bm{w} \in \R^{n}$. 
%$C$ and $D$ be two $\R$-linear codes of length~$n$.
%Again let~$\bm{u} \in C,\bm{v} \in D$ 
Then we denote by 
\[
	\comp_{\bm{w}}(\bm{u},\bm{v}) 
	:= 
	h(\bm{u},\bm{v};\bm{w})
	:=
	(h_{a}(\bm{u},\bm{v};\bm{w}) : a \in \R^{3})
\]
the \emph{joint Jacobi composition} of 
$\bm{u},\bm{v}\in \R^{n}$ 
with respect to $\bm{w}$,
where 
%The components of the joint Jacobi composition 
%is denoted by 
%$h_{a}(\bm{u},\bm{v};\bm{w})$ 
%defined as
$
	h_{a}(\bm{u},\bm{v};\bm{w}) 
	:= 
	\#\{i \mid (u_{i},v_{i},w_{i}) = a \}.
$
%where $a \in \R^{3}$. 
Clearly
	\[
		\sum_{a \in \R^{3}}
		h_{a}(\bm{u},\bm{v};\bm{w}) 
		= n.
	\]
In general, a \emph{joint Jacobi composition} 
$H$ of $n$ denotes a vector with non-negative integer components
$H_{a}$ for $a \in \R^{3}$
such that
\[
	\sum_{a \in \R^{3}}
	H_{a} = n.
\] 

\begin{df}
	Let $C$ and $D$ be two $\R$-linear code of length~$n$. 
	Then the \emph{complete joint Jacobi polynomial} 
	of $C$ and $D$ 
	with respect to $\bm{w} \in \R^{n}$ 
	is denoted by
	$\Jac(C,D,\bm{w}; \{x_{a}\}_{a \in \R^{3}})$ and 	
	defined as
	\begin{align*}
		\Jac(C,D,\bm{w}; \{x_{a}\}_{a \in \R^{3}})
		:= 
		& \sum_{\bm{u}\in C, \bm{v}\in D} 
		\prod_{a \in \R^{3}}
		x_{a}^{h_{a}(\bm{u},\bm{v};\bm{w})}\\
		= 
		& \sum_{H}
		B_{H}^{C,D,\bm{w}} 
		\prod_{a \in \R^{3}}
		x_{a}^{H_{a}}.
	\end{align*}
	where 
	$B_{H}^{C,D,\bm{w}} 
	:= 
	\# 
	\{(\bm{u}, \bm{v}) \in C \times D 
	\mid 
	\comp_{\bm{w}}(\bm{u},\bm{v}) = H\}$.
\end{df}

\begin{rem}\label{Rem:JointJacobi} 
	Let $C$ and $D$ be two $\R$-linear code of length~$n$, 
	and $\bm{w} \in \R^{n}$. 
	Then
%	From the above definition 
	we have 
	\begin{enumerate}
		\item 
		If $C = \{(0,0,\ldots,0)\}$,
%		$ \subseteq \R^{n}$, 
		then $\Jac(C,D,\bm{w}) = Jac(D,\bm{w})$.
		
		\item 
		If $D = \{(0,0,\ldots,0)\}$, 
%		$\subseteq \R^{n}$, 
		then $\Jac(C,D,\bm{w}) = Jac(C,\bm{w})$.
		
		\item If $C = D$ and $\bm{w} = (0,0,\ldots,0)$,
		then $\Jac(C,D,\bm{w}) = \cwe_{C}^{(2)}$.
	\end{enumerate}
\end{rem}

%The MacWilliams identity for the $g$th Jacobi polynomial of a binary code
%was given in~\cite{HOO2020}. 
In this section, we give the MacWilliams type identity for
the complete joint Jacobi polynomial of codes over~$\R$.
We review~\cite{Dougherty2017,DHO,MMC1972}
to introduce some fixed characters over~$\R$.

Let $\R = \FF_{q}$, 
where $q =  p^{f}$ for some prime number $p$.  
A \emph{character} $\chi$ of $\FF_{q}$ 
is a homomorphism from the additive group $\FF_{q}$ 
to the multiplicative group of non-zero complex numbers.
Now let $F(x)$ be a primitive irreducible polynomial 
of degree $f$ over $\FF_{p}$ 
and let $\lambda$ be a root of $F(x)$.
Then any element $\alpha \in \FF_{q}$ 
has a unique representation as:
\begin{equation}\label{EquAlpha}
	\alpha 
	=
	\alpha_{0} + \alpha_{1} \lambda	
	+ \alpha_{2} \lambda^{2}
	+ \cdots +
	\alpha_{f-1} \lambda^{f-1},
\end{equation} 
where $\alpha_{i} \in \FF_{p}$.
We define $\chi(\alpha) := \zeta_{p}^{\alpha_{0}}$, 
where $\zeta_{p}$ is the $p$-th primitive root of unity,
and $\alpha_{0}$ is given by Equation~(\ref{EquAlpha}).

Again if $\R = \ZZ_{k}$,
then for
$\alpha\in\ZZ_{k}$,
we define 
$\chi$ as $\chi(\alpha) := \zeta_{k}^{\alpha}$,
where
$\zeta_{k}$ is the $k$-th primitive root of unity.

Now for any $\alpha \in \R$, 
we have the following property:
\[
	\sum_{i = 0}^{|\R|-1}
	\chi(\alpha\omega_{i}) 
	:= 
	\begin{cases}
		|\R| & \mbox{if} \quad \alpha = 0, \\
		0 & \mbox{if} \quad \alpha \neq 0.
	\end{cases} 
\]

Now we give the MacWilliams relation
for the complete joint Jacobi polynomial
of codes over $\R$.

\begin{thm}[MacWilliams Identity]\label{ThMacWilliams}
	Let $C$ and $D$ be two 
	$\R$-linear codes of length $n$, 
	and $\Jac(C,D,\bm{w}; \{x_{a}\}_{a \in \R^{3}})$ 
	be a complete joint Jacobi polynomial 
	for codes $C$ and $D$ 
	with respect to $\bm{w}\in \R^{n}$. 
	Then
	\begin{align*}
		\Jac (C,D^{\perp},&\bm{w}; \{x_{a}\}_{a \in \R^{3}})\\
		= &
		\dfrac{1}{|D|} 
		\Jac
		\left(
			C,D,\bm{w}; 
			\left\{
				\sum_{b \in \R}
				\chi(a_2 b) x_{(a_1 b a_3)}
			\right\}_{a \in \R^{3}}
		\right).
	\end{align*}
\end{thm}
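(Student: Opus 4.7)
The plan is to perform the classical character-theoretic MacWilliams calculation on the right-hand side, with $\bm{u}$ and $\bm{w}$ carried along as passive spectator indices attached to the variables $x_{(a_1,b,a_3)}$. The key structural observation is that the complete joint Jacobi polynomial factors over coordinates: for any choice of formal variables $y_a$, one has $\prod_{a \in \R^3} y_a^{h_a(\bm{u},\bm{v};\bm{w})} = \prod_{i=1}^n y_{(u_i,v_i,w_i)}$. Hence the substitution $y_a \mapsto \sum_{b \in \R} \chi(a_2 b)\, x_{(a_1,b,a_3)}$ acts independently on each position $i$, producing after expansion a sum that can be rearranged into a single character sum over $D$.

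Concretely, I would substitute and expand the product of sums into a sum indexed by $\bm{b} \in \R^n$, pull the character factors $\chi(v_i b_i)$ out, and regroup them as $\chi(\bm{v}\cdot \bm{b})$. The right-hand side of the claimed identity then becomes
\[
\frac{1}{|D|} \sum_{\bm{u} \in C} \sum_{\bm{b} \in \R^n} \Bigl(\sum_{\bm{v} \in D} \chi(\bm{v} \cdot \bm{b})\Bigr) \prod_{i=1}^n x_{(u_i,b_i,w_i)}.
\]
The inner character sum is the point where the dual code appears: the map $\bm{v} \mapsto \chi(\bm{v} \cdot \bm{b})$ is a character of the additive subgroup $D \subseteq \R^n$, and by the standard orthogonality derived from the property of $\chi$ recalled just before the theorem, it equals $|D|$ when $\bm{b} \in D^\perp$ and $0$ otherwise. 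The surviving sum is by definition $\Jac(C, D^\perp, \bm{w}; x_a : a \in \R^3)$.

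The argument is a direct adaptation of the proof of the MacWilliams identity for complete joint weight enumerators in~\cite{CMxxxx,DHO,MMC1972}; the only real bookkeeping is to track the triple-index variable $x_{(a_1,b,a_3)}$ correctly as the Fourier transform is applied in the middle coordinate only. There is no substantial obstacle, but the one point that deserves explicit mention is the uniform validity of character orthogonality in both settings $\R = \FF_q$ and $\R = \ZZ_k$, which is precisely the property stated immediately preceding the theorem and which is the reason the two cases can be treated in parallel.
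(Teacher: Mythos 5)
Your proposal is correct and is essentially the same argument as the paper's: both rest on the coordinate-wise factorization $\prod_{a}x_a^{h_a(\bm{u},\bm{v};\bm{w})}=\prod_i x_{(u_i,v_i,w_i)}$ together with the character orthogonality $\sum_{\bm{v}\in D}\chi(\bm{v}\cdot\bm{b})=|D|\,\delta_{D^\perp}(\bm{b})$. The only difference is direction — the paper starts from $\Jac(C,D^{\perp},\bm{w})$, inserts the indicator $\delta_{D^\perp}$ as a character sum over $D$, and expands outward, while you start from the substituted right-hand side and collapse it — which is the same computation read in reverse.
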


\begin{proof}
	 Let
	\[
		\delta_{D^\perp}(v) := 
		\begin{cases}
			1 & \mbox{if} \quad v \in D^\perp, \\
			0 & \mbox{otherwise}.
		\end{cases} 
	\]
	Then we have the following identity
	\[
		\delta_{D^\perp}(v) 
		= 
		\dfrac{1}{|D|} \sum_{d \in D} \chi(d \cdot v).
	\]
	Now
	\begin{align*}
		\Jac(C,& {D}^\perp,\bm{w}; \{x_{a}\}_{a \in \R^{3}}) \\
		& = 
		\sum_{\bm{c} \in C} 
		\sum_{\bm{d}^\prime \in {D}^\perp} 
		\prod_{a \in \R^{3}} 
		x_{a}^{h_{a}(\bm{c},\bm{d}^\prime;\bm{w})} \\
		& = 
		\sum_{\bm{c} \in C} 
		\sum_{\bm{v} \in \R^n} 
		\delta_{D^\perp}(\bm{v}) 
		\prod_{a \in \R^{3}} 
		x_{a}^{h_{a}(\bm{c},\bm{v};\bm{w})} \\
		& =
		\dfrac{1}{|D|} 
		\sum_{\bm{c} \in C} 
		\sum_{\bm{v} \in \R^{n}} 
		\sum_{\bm{d} \in D} 
		\chi(\bm{d} \cdot \bm{v}) 
		\prod_{a \in \R^{3}} 
		x_{a}^{h_{a}(\bm{c},\bm{v};\bm{w})} \\
		& = 
		\dfrac{1}{|D|}
		\sum_{\substack{\bm{c} \in C\\ \bm{d} \in D}} 
		\sum_{(v_1,\dots,v_n) \in \R^{n}} 
		\chi(d_1v_1 + \dots + d_nv_n) 
		\prod_{1\leq i \leq n} 
		x_{(c_i v_i w_i)} 
%		\displaybreak
		\\
		& = 
		\dfrac{1}{|D|}
		\sum_{\substack{\bm{c} \in C\\ \bm{d} \in D}} 
		\prod_{1\leq i \leq n} 
		\sum_{v_i \in \R} 
		\chi(d_iv_i) 
		x_{(c_i v_i w_i)} 
		\\
		& = 
		\dfrac{1}{|D|}
		\sum_{\substack{\bm{c} \in C\\ \bm{d} \in D}} 
		\prod_{a = (a_1,a_2,a_3) \in \R^{3}} 
		\left(
		\sum_{b \in \R}
		\chi(a_2 b) 
		y_{(a_1 b a_3)}
		\right)^{h_{a}(\bm{c},\bm{d};\bm{w})} \\
		& = 
		\dfrac{1}{|D|}
		\Jac
		\left(
			C,D,\bm{w};
			\left\{
				\sum_{b \in \R}
				\chi(a_2 b) 
				x_{(a_1 b a_3)}
			\right\}_{a \in \R^{3}}
		\right).	
	\end{align*}
	Hence the proof is completed.
\end{proof}

\begin{cor}
	Let $C$ and $D$ be two $\R$-linear codes of length $n$. 
	Then
	\begin{itemize}
		\item [(i)]
		$\begin{aligned}[t]
			\Jac
			(C^{\perp},D,\bm{w}; &\{x_{a}\}_{a \in \R^{3}})\\
			= &
			\dfrac{1}{|C|} 
			\Jac
			\left(
				C,D,\bm{w};
				\left\{ 
					\sum_{b \in \R}
					\chi(a_1 b) 
					x_{(b a_2 a_3)}
				\right\}_{a \in \R^{3}}
			\right).
		\end{aligned}$
		\item [(ii)]
		$\begin{aligned}[t]
			&\Jac 
			(C^{\perp},D^{\perp},\bm{w}; \{x_{a}\}_{a \in \R^{3}})\\
			& = 
			\dfrac{1}{|C||D|} 
			\Jac
			\left(
				C,D,\bm{w};
				\left\{ 
					\sum_{b_1,b_2 \in \R}
					\chi(a_1 b_1+a_2 b_2) 
					x_{(b_1 b_2 a_3)}
				\right\}_{a \in \R^{3}}
			\right).
		\end{aligned}$
	\end{itemize}
\end{cor}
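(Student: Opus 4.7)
The plan is to deduce both identities from Theorem~\ref{ThMacWilliams} by combining two ingredients: the intrinsic $C\leftrightarrow D$ symmetry of the complete joint Jacobi polynomial, and iterated application of the MacWilliams operator. No new computation beyond what appears in the proof of Theorem~\ref{ThMacWilliams} is needed; the only real work is the bookkeeping of variable indices.

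First I would record the symmetry
\[
\Jac(D, C, \bm{w}; z_a : a \in \R^{3}) = \Jac(C, D, \bm{w}; z_{(a_2, a_1, a_3)} : a \in \R^{3}),
\]
which follows at once from the identity $h_{(a_1, a_2, a_3)}(\bm{v}, \bm{u}; \bm{w}) = h_{(a_2, a_1, a_3)}(\bm{u}, \bm{v}; \bm{w})$ together with the relabeling $a \mapsto (a_2, a_1, a_3)$ in the product defining $\Jac$. Given this, identity~(i) is obtained by a three-step maneuver: swap $C^{\perp}$ and $D$ so that the dualized code sits in the second slot; apply Theorem~\ref{ThMacWilliams} to replace $C^{\perp}$ by $C$ at the cost of the substitution $x_a \mapsto \sum_{b \in \R} \chi(a_2 b) x_{(a_1, b, a_3)}$; and swap back. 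Threading $y_a := x_{(a_2, a_1, a_3)}$ through the intermediate step shows that the net variable substitution applied to $x_a$ is exactly $\sum_{b \in \R} \chi(a_1 b) x_{(b, a_2, a_3)}$, matching the statement.

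For identity~(ii), I would apply (i) and Theorem~\ref{ThMacWilliams} in succession. Starting from $\Jac(C^{\perp}, D^{\perp}, \bm{w}; x_a)$, applying (i) produces a factor $1/|C|$ and replaces each $x_a$ by $y_a := \sum_{b_1 \in \R} \chi(a_1 b_1) x_{(b_1, a_2, a_3)}$, leaving $\Jac(C, D^{\perp}, \bm{w}; y_a)$. Applying Theorem~\ref{ThMacWilliams} next produces another factor $1/|D|$ and replaces $y_a$ by $\sum_{b_2 \in \R} \chi(a_2 b_2) y_{(a_1, b_2, a_3)}$. A short composition yields
\[
\sum_{b_2 \in \R} \chi(a_2 b_2) y_{(a_1, b_2, a_3)} = \sum_{b_1, b_2 \in \R} \chi(a_1 b_1 + a_2 b_2) x_{(b_1, b_2, a_3)},
\]
which gives (ii).

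The only place one can slip is in this last substitution: the variables $y_a$ themselves depend on the index $a$, and the second application of the operator modifies that index. One must therefore expand $y_{(a_1, b_2, a_3)}$, not $y_a$, before combining the two character sums. This indexing step is the main (and indeed only) real obstacle in the argument.
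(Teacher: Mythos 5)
Your proof is correct. The paper states this corollary without proof, as an immediate consequence of Theorem~\ref{ThMacWilliams}, and your derivation---the swap symmetry $h_{(a_1,a_2,a_3)}(\bm{v},\bm{u};\bm{w}) = h_{(a_2,a_1,a_3)}(\bm{u},\bm{v};\bm{w})$ conjugated with the theorem for (i), then iterating the two one-sided identities for (ii)---is exactly the intended route; in particular you correctly flag and handle the one delicate point, namely expanding $y_{(a_1,b_2,a_3)}$ rather than $y_a$ before merging the character sums into $\chi(a_1 b_1 + a_2 b_2)$.
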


Now by Remark~\ref{Rem:JointJacobi}
and by Theorem~\ref{ThMacWilliams}
we have the MacWilliams type identity for
the complete Jacobi polynomial of codes over~$\R$
as follows:
\[
	Jac(C^{\perp},\bm{w}; \{x_{a}\}_{a \in \R^{2}})\\
	=
	\dfrac{1}{|C|} 
	Jac
	\left(
		C,\bm{w};
		\left\{ 
			\sum_{b \in \R}
			\chi(a_1 b) 
			x_{(b a_2)}
		\right\}_{a \in \R^{2}}
	\right)
\]

\section{Main Results}\label{Sec:MainResults}

We write $S_{n}$ for the symmetric group 
acting on the set 
$\{1,2,\dots,n\}$, 
equipped with the composition of permutations. 
For any $\R$-linear code~$C$, the code 
$C^{\sigma}:= \{\bm{u}^{\sigma} \mid \bm{u}\in C\}$ 
for any permutation~$\sigma \in S_{n}$ 
is called \emph{permutationally equivalent} to $C$, 
where 
$\bm{u}^{\sigma} := (u_{\sigma(1)},\dots, u_{\sigma(n)})$.

\begin{df}
	Let $C$ be an $\R$-linear code, 
	and $\bm{w} \in \R^{n}$.
	Then the \emph{average Jacobi polynomial}
	of~$C$ 
	with respect to $\bm{w} \in \R^{n}$
	is defined as follows:
	\[
		Jac^{av}(C,\bm{w}; \{x_{a}\}_{a \in \R^{2}}) 
		:= 
		\dfrac{1}{n!}
		\sum_{\sigma \in S_{n}} 
		Jac(C^{\sigma},\bm{w}; \{x_{a}\}_{a \in \R^{2}}).
	\]
\end{df}

Clearly, we have the MacWilliams identity 
for the average Jacobi polynomial as follows:
\[
	Jac^{av}
	(C^{\perp},\bm{w}; \{x_{a}\}_{a \in \R^{2}})\\
	=
	\dfrac{1}{|C|} 
	Jac^{av}
	\left(
		C,\bm{w};
		\left\{
			\sum_{b \in \R}
			\chi(a_1 b) 
			x_{(b a_2)}
		\right\}_{a \in \R^{2}}
	\right).
\]

Now we have the following result.
We will prove the following theorem
%Theorem~\ref{ThAverageJacobi} 
in a more general setting 
in Theorem~\ref{ThAvJointJacobi}.
%analogue to the main theorem in \cite{Y1989}.
%The proof of the following theorem is straightforward.
%So we omit the proof.
Before stating the theorem we put
\[
	\dbinom{n}{n_{1},\ldots,n_{k}} 
	:=
	\dfrac{n!}{n_{1}!\cdots n_{k}!}.
\]

\begin{thm}\label{ThAverageJacobi}
	Let $C$ be an $\R$-linear code of length $n$,
	and $\bm{w} \in \R^{n}$. 
	Again let 
	$L$ be the composition of $n$
	and
	$R$ be the Jacobi composition of $n$
	such that
	\begin{align*}
	L 
	&=
	\left(
	\sum_{b \in \R}
	R_{(\omega_{0} b)},
	\ldots,
	\sum_{b \in \R}
	R_{(\omega_{|\R|-1} b)}
	\right),
	\\
	\ell(\bm{w}) 
	&=
	\left(
	\sum_{b \in \R}
	R_{(b\omega_{0})},
	\ldots,
	\sum_{b \in \R}
	R_{(b\omega_{|\R|-1})}
	\right).
	\end{align*}
	Then
	\begin{align*}
	Jac^{av}
	(C,& \bm{w}; \{x_{a}\}_{a \in \R^{2}})\\
	& = 
	\sum_{L,R} 
	A_{L}^{C} 
	\dfrac{
		\prod\limits_{b \in \R}
		\dbinom{\ell_{b}(\bm{w})}
		{R_{(\omega_{0} b)},
			\ldots,
			R_{(\omega_{|\R|-1} b)}}}
	{\dbinom{n}{L_{\omega_{0}},\ldots,L_{\omega_{|\R|-1}}}}
	\prod_{a \in \R^{2}} 
	x_{a}^{R_{a}}.
	\end{align*}
\end{thm}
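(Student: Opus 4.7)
The plan is to expand the definition of $Jac^{av}$ and interchange the order of summation. Starting from
\[
Jac^{av}(C,\bm{w}; x_a : a \in \R^2) = \frac{1}{n!} \sum_{\bm{u} \in C} \sum_{\sigma \in S_n} \prod_{a \in \R^2} x_a^{r_a(\bm{u}^\sigma; \bm{w})},
\]
I would first note that if two codewords $\bm{u}_1,\bm{u}_2 \in C$ have the same composition $L$, then $\bm{u}_2=\bm{u}_1^\tau$ for some $\tau \in S_n$, so the reindexing $\sigma \mapsto \tau\sigma$ shows that the inner $S_n$-sums for $\bm{u}_1$ and $\bm{u}_2$ coincide. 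Hence codewords group by composition:
\[
Jac^{av}(C,\bm{w}; x_a : a \in \R^2) = \sum_L A_L^C \cdot f_L(\bm{w}),
\]
where $f_L(\bm{w})$ denotes the common value of the inner sum, computed once for any reference vector $\bm{u}_L$ with $\ell(\bm{u}_L)=L$.

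Next I would reinterpret the $S_n$-average as a weighted sum over the $S_n$-orbit of $\bm{u}_L$. Since the stabilizer of $\bm{u}_L$ has order $\prod_{b \in \R} L_b!$, as $\sigma$ ranges over $S_n$ the tuple $\bm{u}_L^\sigma$ visits each rearrangement of $\bm{u}_L$ exactly $\prod_b L_b!$ times, so
\[
f_L(\bm{w}) = \frac{\prod_{b \in \R} L_b!}{n!} \sum_{\bm{u}':\, \ell(\bm{u}')=L} \prod_{a \in \R^2} x_a^{r_a(\bm{u}';\bm{w})}.
\]
I then regroup the inner sum by the Jacobi composition $R=r(\bm{u}';\bm{w})$. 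Only those $R$ whose first-coordinate marginal equals $L$ and whose second-coordinate marginal equals $\ell(\bm{w})$ can occur, and these are precisely the two compatibility conditions displayed in the theorem.

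The combinatorial heart of the argument is to count, for each admissible $(L,R)$, how many vectors $\bm{u}'$ satisfy $\ell(\bm{u}')=L$ and $r(\bm{u}';\bm{w})=R$. The natural idea is to partition the $n$ coordinates by the value of $w_i$: for each $b \in \R$, the positions where $w_i=b$ form a block of size $\ell_b(\bm{w})$, and within that block the symbols $\omega_0,\ldots,\omega_{|\R|-1}$ must be placed with the prescribed multiplicities $R_{(\omega_0 b)},\ldots,R_{(\omega_{|\R|-1} b)}$. The number of arrangements inside one block is the multinomial $\binom{\ell_b(\bm{w})}{R_{(\omega_0 b)},\ldots,R_{(\omega_{|\R|-1} b)}}$, and distinct blocks contribute independently, so the total count is the product over $b \in \R$. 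Combining this with the identity $\prod_b L_b!/n! = 1/\binom{n}{L_{\omega_0},\ldots,L_{\omega_{|\R|-1}}}$ produces exactly the formula in the theorem.

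The step I expect to require the most care is the block-by-block counting: one has to verify that the two marginal conditions in the statement are exactly the feasibility conditions for this decomposition and that each admissible $\bm{u}'$ is counted exactly once. Everything else is bookkeeping. This structure also makes it transparent why the argument lifts to Theorem~\ref{ThAvJointJacobi} in the joint setting: one enlarges the index set from $\R^2$ to $\R^3$ and repeats the same orbit-plus-block decomposition with respect to $\bm{w}$.
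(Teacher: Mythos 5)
Your proposal is correct and follows essentially the same route as the paper: the paper does not prove Theorem~\ref{ThAverageJacobi} directly but derives it as the special case $D=\{\bm{0}\}$ of Theorem~\ref{ThAvJointJacobi}, whose proof uses exactly your two ingredients — the stabilizer order $\prod_{b\in\R}L_{b}!$ to convert the $S_{n}$-sum into a count of rearrangements, followed by a block-by-block multinomial count of vectors with prescribed composition $L$ and Jacobi composition $R$ relative to $\bm{w}$. Your closing observation about lifting the argument to $\R^{3}$ is precisely how the paper organizes the material.
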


\begin{df}
	Let $C$ and $D$ be two $\R$-linear codes of length~$n$.
	Then
	the \emph{average complete joint Jacobi polynomial} 
	of codes~$C$ and $D$ 
	with respect to $\bm{w} \in \R^{n}$ 
	is defined as follows:
	\[
		\Jac^{av}
		(C,D,\bm{w}; \{x_{a}\}_{a \in \R^{3}}) 
		:= 
		\dfrac{1}{n!}
		\sum_{\sigma \in S_{n}} 
		\Jac(C^{\sigma},D,\bm{w}; \{x_{a}\}_{a \in \R^{3}}).
	\]
\end{df}

\begin{rem}
	We have the following remarks.
	\begin{enumerate}
		\item 
		$\Jac^{av}(C,D,\bm{w}; \{x_{a}\}_{a \in \R^{3}}) 
		\neq 
		\Jac^{av}(D,C,\bm{w}; \{x_{a}\}_{a \in \R^{3}})$.
		\item  
		There exists some $\sigma \in S_{n}$ such that 
		\[
			\Jac^{av}(C^{\sigma},D^{\sigma}, \bm{w}; \{x_{a}\}_{a \in \R^{3}})
			\neq 
			\Jac^{av}(C,D, \bm{w}; \{x_{a}\}_{a \in \R^{3}}).
		\]
		\item 
		If $D = \{(0,0,\ldots,0)\}$, 
		then
		\[
			\Jac^{av}(C,D,\bm{w}; \{x_{a}\}_{a \in \R^{3}})
			=
			Jac^{av}(C,\bm{w};\{x_{b}\}_{b \in \R^{2}}).
		\]
		%		\item If $\bm{w} = (0,0,\ldots,0)$, then
		%		$\Jac^{av}(C,D,\bm{w}; x_{a} : a \in \FF_{q}^{3})
		%		= \mathcal{J}_{C,D}^{av}(x_{a} : a \in \FF_{q}^{2})$
	\end{enumerate}
\end{rem}

From Theorem~\ref{ThMacWilliams}, 
we have the MacWilliams identity 
for the average joint Jacobi polynomial as follows:
\begin{itemize}
	\item [(i)]
	$\begin{aligned}[t]
		\Jac^{av}
		(C,D^{\perp},&\bm{w}; \{x_{a}\}_{a \in \R^{3}})\\
		= &
		\dfrac{1}{|D|} 
		\Jac^{av}
		\left(
			C,D,\bm{w};
			\left\{
				\sum_{b \in \R}
				\chi(a_2 b) x_{(a_1 b a_3)}
			\right\}_{a \in \R^{3}}
		\right).
	\end{aligned}$
	\item [(ii)]
	$\begin{aligned}[t]
		\Jac^{av} 
		(C^{\perp},D,&\bm{w}; \{x_{a}\}_{a \in \R^{3}})\\
		= &
		\dfrac{1}{|C|} 
		\Jac^{av}
		\left(
			C,D,\bm{w}; 
			\left\{
				\sum_{b \in \R}
				\chi(a_1 b) x_{(b a_2 a_3)}
			\right\}_{a \in \R^{3}}
		\right).
	\end{aligned}$
	\item [(iii)]
	$\begin{aligned}[t]
		&\Jac^{av} 
		(C^{\perp},D^{\perp},\bm{w}; \{x_{a}\}_{a \in \R^{3}})\\
		& =
		\dfrac{1}{|C||D|} 
		\Jac^{av}
		\left(
			C,D,\bm{w}; 
			\left\{
				\sum_{b_1,b_2 \in \R}
				\chi(a_1 b_1+a_2 b_2) 
				x_{(b_1 b_2 a_3)}
			\right\}_{a \in \R^{3}}
		\right).
	\end{aligned}$
\end{itemize}

%In the following theorem, 
Now we have the following result analogue to the main theorem in~\cite{Y1989}
which represents the average of the complete joint Jacobi polynomials
of two codes $C$ and $D$ of length $n$
with respect to $\bm{w} \in \R^{n}$ 
by using 
the compositions of $n$ and its distribution in 
the codes. 

\begin{thm}\label{ThAvJointJacobi}
	Let $C$ and $D$ be two $\R$-linear codes of length $n$,
	and $\bm{w} \in \R^{n}$.
	Let $L$ be the composition of $n$ 
	and $R$ be the Jacobi composition of $n$. 
	Again let $H$ be the joint Jacobi composition of $n$ 
	such that
	\begin{align*}
		L &=
		\left(
		\sum_{a = (a_{1},a_{2}) \in \R^{2}}
		H_{(b a_{1} a_{2})} : b \in \R
%		H_{(\omega_{0} a_{1} a_{2})},
%		\ldots,
%		\sum_{a \in \R^{2}}
%		H_{(\omega_{|\R|-1} a_{1} a_{2})}
		\right),
		\\
		R &=
		\left(
		\sum_{b \in \R}
		H_{(b a_{1} a_{2})} 
		: 
		a = (a_{1}, a_{2}) \in \R^{2}
		\right).
	\end{align*}
	Then
	\begin{align*}
		\Jac^{av}
		(C,D,& \bm{w}; \{x_{c}\}_{c \in \R^{3}})\\
		& = 
		\sum_{L,R,H} 
		A_{L}^{C} 
		B_{R}^{D,\bm{w}}
		\dfrac{
		\prod\limits_{a \in \R^{2}}
		\dbinom{R_{a}}
		{H_{(\omega_{0} a_{1} a_{2})},
		\ldots,
		H_{(\omega_{|\R|-1} a_{1} a_{2})}}}
		{\dbinom{n}{L_{\omega_{0}},\ldots,L_{\omega_{|\R|-1}}}}
		\prod_{c \in \R^{3}} 
		x_{c}^{H_{c}}.
	\end{align*}
\end{thm}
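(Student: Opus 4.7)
The plan is to unfold the definition of $\Jac^{av}$, swap the sum over $\sigma$ with the sum over codewords, and evaluate the resulting inner sum by a combinatorial argument that partitions sets and bijectively matches them. First I would rewrite
\[
\Jac^{av}(C,D,\bm{w}; x_{c} : c \in \R^{3})
=
\frac{1}{n!}
\sum_{\bm{u}\in C,\,\bm{v}\in D}
\sum_{\sigma\in S_{n}}
\prod_{c\in\R^{3}} x_{c}^{h_{c}(\bm{u}^{\sigma},\bm{v};\bm{w})},
\]
via the bijection $\bm{u}\mapsto\bm{u}^{\sigma}$ from $C$ to $C^{\sigma}$. The problem then reduces to evaluating, for each fixed pair $(\bm{u},\bm{v})$, the inner sum $\sum_{\sigma\in S_{n}}\prod_{c}x_{c}^{h_{c}(\bm{u}^{\sigma},\bm{v};\bm{w})}$.

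Next I would introduce the sets $Q_{b}:=\{j:u_{j}=b\}$ of size $L_{b}$, where $L=\comp(\bm{u})$, and $P_{a}:=\{i:(v_{i},w_{i})=a\}$ of size $R_{a}$, where $R=\comp_{\bm{w}}(\bm{v})$. A permutation $\sigma$ contributes the monomial $\prod_{c}x_{c}^{H_{c}}$ precisely when $\#\{i\in P_{a}:\sigma(i)\in Q_{b}\}=H_{(b\,a_{1}\,a_{2})}$ for every $b\in\R$ and $a=(a_{1},a_{2})\in\R^{2}$; the marginal relations stated in the theorem are exactly the row and column sum constraints of this crosstabulation, so only joint Jacobi compositions $H$ compatible with both $L$ and $R$ contribute.

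The main combinatorial step, which I expect to be the most delicate part of the argument, is counting the permutations $\sigma$ realising a prescribed $H$. I would execute this in two stages: first, for each $a\in\R^{2}$, partition $P_{a}$ into blocks $P_{a,b}$ of sizes $H_{(b\,a_{1}\,a_{2})}$ indexed by $b\in\R$, giving a factor of $\prod_{a\in\R^{2}}\dbinom{R_{a}}{H_{(\omega_{0}\,a_{1}\,a_{2})},\ldots,H_{(\omega_{|\R|-1}\,a_{1}\,a_{2})}}$; second, for each $b\in\R$, the union $\bigcup_{a} P_{a,b}$ has size $L_{b}$ and must be bijectively mapped onto $Q_{b}$, contributing a further factor $\prod_{b\in\R}L_{b}!$. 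Dividing by $n!$ and using $\prod_{b}L_{b}!\,/\,n! = 1/\dbinom{n}{L_{\omega_{0}},\ldots,L_{\omega_{|\R|-1}}}$ puts the inner sum into the desired shape.

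Finally I would observe that this count depends on $\bm{u}$ and $\bm{v}$ only through $L$ and $R$, so grouping $\bm{u}\in C$ by composition (contributing $A_{L}^{C}$) and $\bm{v}\in D$ by Jacobi composition with respect to $\bm{w}$ (contributing $B_{R}^{D,\bm{w}}$) produces the claimed formula. The hard part will be to verify rigorously that the partition-then-match construction is a bijection onto $\{\sigma:h(\bm{u}^{\sigma},\bm{v};\bm{w})=H\}$, with neither overcounting nor undercounting; compatibility with $\comp(\bm{w})$ is automatic because $B_{R}^{D,\bm{w}}=0$ for any $R$ whose marginal in the second coordinate does not equal $\ell(\bm{w})$.
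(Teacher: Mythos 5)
Your proposal is correct and follows essentially the same route as the paper: both reduce to counting, for fixed $\bm{u}\in T_{L}^{C}$ and $\bm{v}\in T_{R}^{D,\bm{w}}$, the permutations $\sigma$ with $\comp_{\bm{w}}(\bm{u}^{\sigma},\bm{v})=H$, and both obtain the same count $\prod_{b\in\R}L_{b}!\cdot\prod_{a\in\R^{2}}\binom{R_{a}}{H_{(\omega_{0}a_{1}a_{2})},\ldots,H_{(\omega_{|\R|-1}a_{1}a_{2})}}$. The only cosmetic difference is that the paper factors this count as the stabilizer order $\prod_{b}L_{b}!$ times the number of vectors $\bm{u}'$ with $\comp(\bm{u}')=L$ and $\comp_{\bm{w}}(\bm{u}',\bm{v})=H$, whereas you build the permutations directly by the partition-then-match construction; these are the same enumeration.
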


\begin{proof}
	Let $C$ and $D$ be two $\R$-linear codes of length~$n$. 
	Then the joint Jacobi polynomial of $C$ and $D$ 
	with respect to~$\bm{w}\in \R^{n}$ is
\begin{equation}\label{EquDefCJWE}
	\Jac(C,D,\bm{w};\{x_{c}\}_{c \in \R^{3}}) 
	= 
	\sum_{H} B_{H}^{C,D,\bm{w}} 
	\prod_{c \in \R^{3}}
	x_{c}^{H_{c}},
\end{equation}
where $	\sum_{c \in \R^{3}} H_{c} = n$. 
Now define
\begin{multline*}
	N_{L,R,H}^{C,D,\bm{w}} 
	:= \\
	\#
	\{
	(\bm{u},\bm{v}) \in C \times D 
	\mid 
%	\ell(\bm{u})=L,
	\comp(\bm{u}) = L, 
%	r(\bm{v};\bm{w})= R,
 	\comp_{\bm{w}}(\bm{v}) = R,
%	h(\bm{u},\bm{v};\bm{w}) 
	\comp_{\bm{w}}(\bm{u},\bm{v}) = H
	\}.
\end{multline*}
Therefore, 
$B_{H}^{C,D,\bm{w}} = N_{L,R,H}^{C,D,\bm{w}}$,	
for 
\begin{align*}
	L &=
	\left(
	\sum_{a = (a_{1},a_{2}) \in \R^{2}}
	H_{(b a_{1} a_{2})} : b \in \R
	%		H_{(\omega_{0} a_{1} a_{2})},
	%		\ldots,
	%		\sum_{a \in \R^{2}}
	%		H_{(\omega_{|\R|-1} a_{1} a_{2})}
	\right),
	\\
	R &=
	\left(
	\sum_{b \in \R}
	H_{(b a_{1} a_{2})} 
	: 
	a = (a_{1}, a_{2}) \in \R^{2}
	\right).
\end{align*}
Hence we can write from Equation~(\ref{EquDefCJWE})
\begin{equation*}
	\Jac(C,D,\bm{w};\{x_{c}\}_{c \in \R^{3}}) 
	= 
	\sum_{L,R,H} 
	N_{L,R,H}^{C,D,\bm{w}} 
	\prod_{c \in \R^{3}}
	x_{c}^{H_{c}}.
\end{equation*}
Now
\begin{align*}
	\sum_{\sigma \in S_n} 
	N_{L,R,H}^{C^\sigma,D,\bm{w}} 
	& = 
	\# 
	\{
		(\bm{u},\bm{v},\sigma) 
		\in T_{L}^{C} \times T_{R}^{D,\bm{w}} \times S_{n} 
		\mid
		\comp_{\bm{w}}(\bm{u}^\sigma,\bm{v}) = H
	\} \\		
	& = 
	\sum_{\bm{u} \in T_{L}^{C}}
	\sum_{\bm{v} \in T_{R}^{D,\bm{w}}} 
	\# 
	\{
	\sigma \in S_{n} 
	\mid 
	\comp_{\bm{w}}(\bm{u}^\sigma,\bm{v}) 
	= H
	\}.
\end{align*}
We observe that 
the order of a subgroup of $S_{n}$ which 
stabilizes 
$\bm{u} \in T_{L}^{C}$ 
is 
$\prod_{b \in \R }L_{b}!$. 
Therefore
\begin{align*}
	\sum_{\sigma \in S_{n}} 
	N_{L,R,H}^{C^\sigma,D,\bm{w}} 
	& = 
	\sum_{\bm{u} \in T_{L}^{C}}
	\sum_{\bm{v} \in T_{R}^{D,\bm{w}}} 
%	\prod_{i=0}^{|\R|-1}
	\prod_{b \in \R }
	L_{b}! M_{L,H},
\end{align*}
where
\[
	M_{L,H}
	:=
	\#
	\{
	\bm{u}^{\prime} \in \R^{n} 
	\mid 
	\comp(\bm{u}^{\prime}) = L,
	\comp_{\bm{w}}(\bm{u}^{\prime},\bm{v}) = H
	\}.
\]
Therefore
\begin{align*}
	\sum_{\sigma \in S_{n}} 
	N_{L,R,H}^{C^\sigma,D,\bm{w}} 
	& =		
	\sum_{\bm{u} \in T_{L}^{C}}
	\sum_{\bm{v} \in T_{R}^{D,\bm{w}}} 
	\prod_{b \in \R}
	L_{b}! 
	\prod_{a \in \R^{2}}
	\dfrac{R_{a}!}
	{\prod_{b \in \R}
	H_{(b a_{1} a_{2})}!} \\		
	& = 
	A_{L}^{C} B_{R}^{D,\bm{w}}
	\prod_{b \in \R}
	L_{b}! 
	\prod_{a \in \R^{2}}
	\dfrac{R_{a}!}
	{\prod_{i=0}^{|\R|-1}
	H_{(\omega_{i} a_{1} a_{2})}!} \\
	& = 
	A_{L}^{C} B_{R}^{D,\bm{w}}
	n!
	\dfrac
	{\prod_{a \in \R^{2}}
	\dfrac{R_{a}!}
	{\prod_{b \in \R}
	H_{(b a_{1} a_{2})}!}}
	{\dfrac{n!}{\prod_{b \in \R}L_{b}!}}\\
	&= 
	A_{L}^{C} B_{R}^{D,\bm{w}} 
	n!
	\dfrac{
	\prod\limits_{a \in \R^{2}}
	\dbinom{R_{a}}
	{H_{(\omega_{0} a_{1} a_{2})},
	\ldots,
	H_{(\omega_{|\R|-1} a_{1} a_{2})}}}
	{\dbinom{n}{L_{\omega_{0}},\ldots,L_{\omega_{|\R|-1}}}}.
\end{align*}
Now we have
\begin{align*}
	\Jac^{av}
	& (C,D,\bm{w}; \{x_{c}\}_{c \in \R^{3}})\\ 
	& = 
	\dfrac{1}{n!} 
	\sum_{\sigma\in S_{n}} 
	\Jac(C^\sigma,D,\bm{w};x_{c}: c \in \R^{3})\\
	&= 
	\dfrac{1}{n!} 
	\sum_{L,R,H} 
	\sum_{\sigma\in S_{n}} 
	N_{L,R,H}^{C^\sigma,D} 
	\prod_{c \in \R^{3}}
	x_{c}^{H_{c}} \\
	&= 
	\sum_{L,R,H} 
	A_{L}^{C} B_{R}^{D,\bm{w}}
	\dfrac{
	\prod\limits_{a \in \R^{2}}
	\dbinom{R_{a}}
	{H_{(\omega_{0} a_{1} a_{2})},
	\ldots,
	H_{(\omega_{|\R|-1} a_{1} a_{2})}}}
	{\dbinom{n}{L_{\omega_{0}},\ldots,L_{\omega_{|\R|-1}}}}
	\prod_{c \in \R^{3}}
	x_{c}^{H_{c}}.
\end{align*}
This completes the proof.
\end{proof}

\section{Average of $(g+1)$-fold Complete Joint Jacobi Polynomials}
\label{Sec:AvHigherGenJacobi}

In this section,
we present a generalization of the average complete joint Jacobi polynomials
of linear codes over~$\R$.
We call these Jacobi polynomials the average $(g+1)$-fold complete joint Jacobi
polynomials.
We obtain a generalized MacWilliams identity for these Jacobi polynomials.
We also discuss an analogy of Theorem~\ref{ThAvJointJacobi} for the 
average $(g+1)$-fold complete joint Jacobi
polynomials. 

Let $\bm{w} \in \R^{n}$.
Then we denote the 
$g$-\emph{fold joint Jacobi composition} 
of~$\bm{u}_{1},\ldots,\bm{u}_{g} \in \R^{n}$
with respect to $\bm{w}$ by
\begin{align*}
	\comp_{\bm{w}}(\bm{u}_{1},\ldots,\bm{u}_{g})
	& :=
	h(\bm{u}_{1},\ldots,\bm{u}_{g};\bm{w})\\
	& :=
	(h_{a}(\bm{u}_{1},\ldots,\bm{u}_{g};\bm{w}):a\in\R^{g+1}),
\end{align*}
where
$h_{a}(\bm{u}_{1},\ldots,\bm{u}_{g};\bm{w})$ 
denotes the number of coordinate position $i$ such that
$a = (u_{1i},\ldots,u_{gi},w_{i})$.
It is immediate that
\[
	\sum_{a \in \R^{g+1}}
	h_{a}(\bm{u}_{1},\ldots,\bm{u}_{g};\bm{w})
	=
	n.
\]
Now we define the $g$-\emph{fold joint Jacobi composition} $H^{(g)}$ of $n$
by $H^{(g)} = (H_{a}^{(g)}: a \in \R^{g+1})$,
where the non-negative integer components 
$H_{a}^{(g)}$ satisfy the following condition:
\[
	\sum_{a \in \R^{g+1}}
	H_{a}^{(g)}
	= 
	n.
\]
\begin{df}
	Let $C_{1}, C_{2}, \ldots, C_{g}$ 
	be $\R$-linear codes of length~$n$.
	Then the $g$-\emph{fold complete joint Jacobi polynomial}
	of~$C_{1}, C_{2}, \ldots, C_{g}$ with respect to $\bm{w} \in \R^{n}$
	is defined as follows:
	\begin{align*}
		\Jac
		(C_{1},\ldots,C_{g},\bm{w};\{x_{a}\}_{a \in \R^{g+1}})
		:=
		& \sum_{\bm{u}_{1}\in C_{1},\ldots,\bm{u}_{g}\in C_{g}}
		\prod_{a \in \R^{g+1}}
		x_{a}^{h_{a}(\bm{u}_{1},\ldots,\bm{u}_{g},\bm{w})}\\
		=
		& \sum_{H^{(g)}}
		B_{H^{(g)}}^{C_{1},\ldots,C_{g},\bm{w}}
		\prod_{a \in \R^{g+1}}
		x_{a}^{H_{a}^{(g)}},
	\end{align*}
	where
	$B_{H^{(g)}}^{C_{1},\ldots,C_{g},\bm{w}}$
	is the number of $g$-tuple 
	$(\bm{u}_{1},\ldots,\bm{u}_{g}) \in C_{1}\times\cdots\times C_{g}$
	such that
	$\comp_{\bm{w}}(\bm{u}_{1},\ldots,\bm{u}_{g}) = H^{(g)}$.
\end{df} 

The MacWilliams identity for the $g$-fold joint weight enumerators of codes
over~$\ZZ_{k}$ was given in~\cite{DHO}. 
Recently the MacWilliams identity for the $g$-th Jacobi polynomial 
of a binary code was given in~\cite{HOO2020}. 
Now we give a generalized MacWilliams identity for 
the $g$-fold complete joint Jacobi polynomials of $\R$-linear codes. 
Let $\widetilde{C}_{i}$ denote either $C_{i}$ or $C_{i}^{\perp}$.
Then
\[
	\delta(C_{i},\widetilde{C}_{i})
	:=
	\begin{cases}
		0 & \mbox{if } \widetilde{C}_{i} = C_{i},\\
		1 & \mbox{if } \widetilde{C}_{i} = C_{i}^{\perp}.
	\end{cases}
\]  
We recall the character~$\chi$ from Section~\ref{Sec:MacWilliams}.
Let $T_{\R} = (\chi(ab))_{a,b\in\R}$ be an~$|\R|\times |\R|$ matrix.

\begin{thm}[Generalized MacWilliams identity]\label{Thm:GenMacWilliams}
	Let $C_{1},\ldots,C_{g}$ 
	be $\R$-linear codes of length~$n$,
	and~$\Jac(C_{1},\ldots,C_{g},\bm{w};\{x_{a}\}_{a \in \R^{g+1}})$
	be the $g$-fold complete joint Jacobi polynomial for 
	$C_{1},\ldots,C_{g}$ 
	with respect to $\bm{w} \in \R^{n}$.
	Then
	\begin{multline*}
		\Jac
		(\widetilde{C}_{1},\ldots,\widetilde{C}_{g},\bm{w};\{x_{a}\}_{a \in \R^{g+1}})\\
		=
		\dfrac{1}{|C_{1}|^{\delta(C_{1},\widetilde{C}_{1})}\cdots|C_{g}|^{\delta(C_{g},\widetilde{C}_{g})}}
		T_{\R}^{\delta(C_{1},\widetilde{C}_{1})}
		\otimes
		\cdots
		\otimes
		T_{\R}^{\delta(C_{g},\widetilde{C}_{g})}
		\otimes
		T_{\R}^{0}\\
		\Jac(C_{1},\ldots,C_{g},\bm{w};\{x_{a}\}_{a \in \R^{g+1}}),
	\end{multline*}
	where $T_{\R}^{0}$ denotes the identity matrix~$I$ of order~$|\R|$.
\end{thm}

\begin{proof}
	Let 
	$\widetilde{C}_{k}$ be $C_{k}^{\perp}$ and 
	$\widetilde{C}_{i}$ be $C_{i}$ for $k \neq i$. 
	Then it is sufficient to show that
	\begin{multline*}
		|C_{k}|
		\Jac
		(C_{1},\ldots,C_{k-1},C_{k}^{\perp},C_{k+1},\ldots,C_{g},\bm{w};
		\{x_{a}\}_{a \in \R^{g+1}})\\
		=
		\underset{k-1}{\underbrace{I \otimes\cdots\otimes I}}
		\otimes
		\underset{k\text{-th}}{T_{\R}}
		\otimes
		\underset{g-k}{\underbrace{I \otimes\cdots\otimes I}}
		\otimes
		\underset{(g+1)\text{-th}}{I}\\
		\Jac
		(C_{1},\ldots,C_{k-1},C_{k},C_{k+1},\ldots,C_{g},\bm{w};
		\{x_{a}\}_{a \in \R^{g+1}}).
	\end{multline*}
	The proof is straightforward. So, we leave it for the readers.
\end{proof}

\begin{df}
	Let $C$, $D_{1},\ldots,D_{g}$ be $(g+1)$ $\R$-linear codes of length~$n$.
	Then
	the \emph{average $(g+1)$-fold complete joint Jacobi polynomial} 
	of codes $C$, $D_{1},\ldots,D_{g}$ 
	with respect to $\bm{w} \in \R^{n}$ 
	is defined as follows:
	\begin{multline*}
		\Jac^{av}(C,D_{1},\ldots,D_{g},\bm{w}; \{x_{a}\}_{a \in \R^{g+1}})\\ 
		:= 
		\dfrac{1}{n!}
		\sum_{\sigma \in S_{n}} 
		\Jac(C^{\sigma},D_{1},\ldots,D_{g},\bm{w}; \{x_{a}\}_{a \in \R^{g+1}}).
	\end{multline*}
\end{df}

It is immediate from Theorem~\ref{Thm:GenMacWilliams} 
that the average $(g+1)$-fold complete joint Jacobi polynomials of 
$\R$-linear codes
$C,D_{1},\ldots,D_{g}$ satisfy the following MacWilliams type identity: 
\begin{multline*}
	\Jac^{av}
	(\widetilde{C},\widetilde{D}_{1},\ldots,\widetilde{D}_{g},\bm{w};
	\{x_{a}\}_{a \in \R^{g+2}})\\
	=
	\dfrac{1}{|C|^{\delta(C,\widetilde{C})}|
		D_{1}|^{\delta(D_{1},\widetilde{D}_{1})}
		\cdots
		|D_{g}|^{\delta(D_{g},\widetilde{D}_{g})}}\\
	T_{\R}^{\delta(C,\widetilde{C})}
	\otimes
	T_{\R}^{\delta(D_{1},\widetilde{D}_{1})}
	\otimes
	\cdots
	\otimes
	T_{\R}^{\delta(D_{g},\widetilde{D}_{g})}
	\otimes
	T_{\R}^{0}\\
	\Jac^{av}(C,D_{1},\ldots,D_{g},\bm{w};\{x_{a}\}_{a \in \R^{g+2}}),
\end{multline*}
where 
notations carry the same meaning as in Theorem~\ref{Thm:GenMacWilliams}.
%$T_{\R}^{0}$ denotes the identity matrix~$I$ of order~$|\R|$.

Now we have the following generalization of Theorem~\ref{ThAvJointJacobi}.
The proof of the following theorem is not so difficult.
So, we leave it for the readers.

\begin{thm}\label{Thm:GenJacYoshida}
	Let 
	$C, D_{1},\ldots,D_{g}$
	be $(g+1)$ linear codes of length~$n$ over~$\R$, 
	and $\bm{w} \in \R^{n}$.
	Let $L$
	be the composition of~$n$,
	where
	$L = (L_{a}:a\in\R)$,
	and
	$R_{1},\ldots,R_{g}$
	be the Jacobi compositions of~$n$,
	where
	$R_{i} = (R_{ib}:b\in\R^{2})$
	for $i = 1,\ldots,g$.
	Let 
	$H^{(g)}$ be the~$g$-fold joint Jacobi composition
	of~$n$, where
	$H^{(g)} = (H_{c}^{(g)} : c = (c_{1},\ldots,c_{g},c_{g+1}) \in \R^{g+1})$
	such that for~$i = 1,\ldots,g$,
	\begin{align*}
		R_{ib} 
		=
		\sum_{\widehat{c} = (c_{1},\ldots,c_{i-1},b_{1},c_{i+1},\ldots,c_{g},b_{2})\in\R^{g+1}}
		H_{\widehat{c}}^{(g)},
		\quad
		\mbox{where }
		b = (b_{1},b_{2}) \in \R^{2}.
	\end{align*}
	Again let
	$H^{(g+1)}$ 
	be the~$(g+1)$-fold joint Jacobi composition of~$n$,
	where~
	$H^{(g+1)} = (H_{d}^{(g+1)} : d = (d_{0},d_{1},\ldots,d_{g},d_{g+1}) \in \R^{g+2})$
	such that
	\begin{align*}
		L_{a}
		& =
		\sum_{\widehat{d} = (a,d_{1},\ldots,d_{g},d_{g+1}) \in \R^{g+2}}
		H_{\widehat{d}}^{(g+1)}
		\quad
		\mbox{for }
		a \in \R,\\
		H_{c}^{(g)}
		& =
		\sum_{\widehat{d} = (d_{0},c_{1},\ldots,c_{g},c_{g+1}) \in \R^{g+2}}
		H_{\widehat{d}}^{(g+1)}
		\quad
		\mbox{for }
		c = (c_{1},\ldots,c_{g},c_{g+1}) \in \R^{g+1}.
	\end{align*}
	Then
	\begin{multline*}
		\Jac^{av}
		(C,D_{1},\ldots,D_{g}, \bm{w}; \{x_{e}\}_{e \in \R^{g+2}})
		= 
		\sum_{L,H^{(g)},H^{(g+1)}} 
		A_{L}^{C} 
		B_{H^{(g)}}^{D_{1},\ldots,D_{g},\bm{w}}\\
		\dfrac{
			\prod\limits_{c = (c_{1},\ldots,c_{g},c_{g+1}) \in \R^{g+1}}
			\dbinom{H_{c}^{(g)}}
			{H_{(\omega_{0} c_{1}\cdots c_{g}c_{g+1})}^{(g+1)},
				\ldots,
				H_{(\omega_{|\R|-1} c_{1}\cdots c_{g}c_{g+1})}^{(g+1)}}}
		{\dbinom{n}{L_{\omega_{0}},\ldots,L_{\omega_{|\R|-1}}}}
		\prod_{d \in \R^{g+2}} 
		x_{d}^{H_{d}^{(g+1)}}.
	\end{multline*}
\end{thm}

\section{Average Jacobi Intersection Number}\label{Sec:AvJacobiInterNum}

The notion of the average intersection number was
introduced in~\cite{Y1989} for binary linear codes.
In~\cite{CMxxxx}, the average intersection number 
for $\R$-linear codes was studied. 
In this section, we give the notion of 
the average Jacobi intersection number 
for $\R$-linear codes. 

For $\bm{u},\bm{w} \in \R^{n}$,
we define
${\bm{u}}^{\bm{w}} = (u_{1}^{w_{1}},\ldots,u_{n}^{w_{n}})$
such that
\[
	u_{i}^{w_{i}}
	=
	\begin{cases}
		u_{i} & \mbox{ if } w_{i} = 0,\\
		0 & \mbox{ otherwise}.
	\end{cases}
\]
%\[
%	{\bm{u}}_{\bm{w},{0}} 
%	:=
%	\{(u_{i},i) \mid w_{i} = 0 \text{ for } 1 \leq i \leq n\}
%\]
Let $C$ and $D$ be two 	$\R$-linear codes of length~$n$.
Then the \emph{Jacobi intersection} of the codes $C$ and $D$ 
with respect to $\bm{w} \in \R^{n}$ is defined as, 
\[
	C \cap_{\bm{w}} D 
	:=
	\{
	(\bm{u},\bm{v}) \in C \times D 
	\mid 
	\bm{u}^{\bm{w}} 
	= \bm{v}^{\bm{w}} 
%	\text{ with } w_{i} = 0 \text{ for } 1 \leq i \leq n
	\}.
\]
Now we defined the \emph{average Jacobi intersection number} 
for $C$ and $D$ with respect to~$\bm{w}\in \R^{n}$ 
as follows:
\[
	\Delta^{\bm{w}}(C,D) 
	:= 
	\dfrac{1}{n!} \sum_{\sigma \in S_{n}} |C^\sigma \cap_{\bm{w}} D|.
\]
Clearly, 
$h_{a}(\bm{u},\bm{v};\bm{w}) = 0$
for
$a = (a_{1},a_{2},a_{3} : a_{1} \neq a_{2}, a_{3} = 0) \in \R^{3}$
if and only if~
$\bm{u}^{\bm{w}} = \bm{v}^{\bm{w}}$. 
Thus we have the following remark.

\begin{rem}\label{Rem:AvInterNum}
	For $a = (a_{1},a_{2},a_{3}) \in \R^{3}$, we let 
	\[
		y_{a}
		=
		\begin{cases}
			0 & \mbox{ if } a_{1} \neq a_{2} \text{ and } a_{3} = 0,\\
			1 & \mbox { otherwise}.
		\end{cases}
	\]
	Then 
	$\Jac^{av}(C,D,\bm{w}; \{x_{a} \leftarrow y_{a}\}_{a \in \R^{3}}) = \Delta^{\bm{w}}(C,D)$.
\end{rem}

Now we have the following result.

\begin{thm}\label{ThAvJacobiInterNum}
	Let $C$ and $D$ be two $\R$-linear codes of length~$n$
	and $\bm{w} \in \R^{n}$.
	Again let
	$R^{\prime}$ and $R^{\prime\prime}$
	be two Jacobi compositions of~$n$
	such that
	\[
		R^{\prime}_{(a\omega_{0})} 
		= 
		R^{\prime\prime}_{(a\omega_{0})}, 
		\text{ for } 
		a \in \R 
%		i = 0,1, \ldots, |\R|-1,
	\]
	and 
	\[
%		\sum_{i=0}^{|\R|-1}
		\sum_{a \in \R}
		R^{\prime}_{(ab)}
		\leq 
		\ell_{b}(\bm{w}),
		\text{ for }
		b \in \R
%		j = 0,1, \ldots, |\R|-1.
	\]
	Let $L$ be the composition of~$n$ 
	such that
	\[
		L =
		\left(
		\sum_{a \in \R}
		R^{\prime}_{(\omega_{0} a)},
		\ldots,
		\sum_{a \in \R}
		R^{\prime}_{(\omega_{|\R|-1} a)}
		\right).
	\]
	Then
	\[
		\Delta^{\bm{w}}(C,D)
		=
		\sum_{R^{\prime},R^{\prime\prime},L} 
		A_{L}^{C} B_{R^{\prime\prime}}^{D,\bm{w}}
		\dfrac
		{\prod_{a \in \R}
		\dbinom{\ell_{a}(\bm{w})}
		{R^{\prime}_{(\omega_{0}a)},
		\ldots, 
		R^{\prime}_{(\omega_{|\R|-1}a)}}}
		{\dbinom{n}{L_{\omega_{0}},\ldots,L_{\omega_{|\R|-1}}}}.
	\]
\end{thm}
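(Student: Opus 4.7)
The plan is to deduce the formula from Theorem~\ref{ThAvJointJacobi} via the specialization described in Remark~\ref{Rem:AvInterNum}. Starting from the identity
\[
\Delta^{\bm{w}}(C,D)
=
\Jac^{av}(C, D, \bm{w}; x_a \leftarrow y_a : a \in \R^3),
\]
I substitute $y_a$ into the closed form given by Theorem~\ref{ThAvJointJacobi}. Since $y_a = 0$ exactly when $a_1 \neq a_2$ and $a_3 = \omega_0$, the substitution collapses the sum over joint Jacobi compositions $H$ to those for which $H_{(a_1, a_2, \omega_0)} = 0$ whenever $a_1 \neq a_2$, i.e., those expressing coordinate-wise agreement of $\bm{u}$ and $\bm{v}$ on the set $\{i : w_i = \omega_0\}$.

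Next, I reparametrize by introducing the Jacobi composition $R'$ of $\bm{u}$ with respect to $\bm{w}$, defined by
\[
R'_{(b, c)} := \sum_{c' \in \R} H_{(b, c', c)},
\]
and identify the $R$ appearing in Theorem~\ref{ThAvJointJacobi} with the $R''$ of the present statement (the Jacobi composition of $\bm{v}$ with respect to $\bm{w}$). The product $\prod_{a \in \R^2} \binom{R''_a}{H_{(\omega_0, a_1, a_2)}, \ldots, H_{(\omega_{|\R|-1}, a_1, a_2)}}$ from Theorem~\ref{ThAvJointJacobi} splits according to the value of $c = a_2$. For $c = \omega_0$ the collapse condition forces $H_{(b, a_1, \omega_0)} = \delta_{b, a_1} R''_{(a_1, \omega_0)}$, producing trivial multinomial factors and enforcing the compatibility $R'_{(a, \omega_0)} = R''_{(a, \omega_0)}$ stated in the theorem. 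For each fixed $c \neq \omega_0$, the inner sum over matrices $M = (H_{(b, c', c)})_{b, c' \in \R}$ with prescribed row sums $R'_{(b, c)}$ and column sums $R''_{(c', c)}$ is evaluated via the contingency-table identity
\[
\sum_{M} \prod_{c' \in \R} \binom{R''_{(c', c)}}{M_{\omega_0, c'}, \ldots, M_{\omega_{|\R|-1}, c'}}
= \binom{\ell_{c}(\bm{w})}{R'_{(\omega_0, c)}, \ldots, R'_{(\omega_{|\R|-1}, c)}},
\]
obtained by comparing the coefficient of $\prod_{b \in \R} z_b^{R'_{(b, c)}}$ in $\prod_{c' \in \R} \bigl(\sum_{b \in \R} z_b\bigr)^{R''_{(c', c)}}$ with that in $\bigl(\sum_{b \in \R} z_b\bigr)^{\ell_{c}(\bm{w})}$.

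Assembling these factors and reindexing the outer sum by the triple $(L, R', R'')$, with the composition $L$ of $\bm{u}$ determined by $L_b = \sum_{a \in \R} R'_{(b, a)}$, should produce the stated formula. The main technical obstacle I anticipate is bookkeeping: tracking how the single summation variable $H$ of Theorem~\ref{ThAvJointJacobi} decomposes into the pair $(R', R'')$ together with the internal contingency-table data for each slice $c \neq \omega_0$, and verifying that the stated side conditions --- $R'_{(a, \omega_0)} = R''_{(a, \omega_0)}$ and $\sum_{a \in \R} R'_{(a, b)} \leq \ell_b(\bm{w})$ --- are precisely the ones imposed on the surviving terms by the specialization.
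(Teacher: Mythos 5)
Your route is genuinely different from the paper's: the paper proves this theorem by a direct count of triples $(\bm{u},\bm{v},\sigma)\in C\times D\times S_n$ with $(\bm{u}^{\bm{w}})^{\sigma}=\bm{v}^{\bm{w}}$, stratified by $\comp(\bm{u})=L$ and $\comp_{\bm{w}}(\bm{v})=R''$ and evaluated with stabilizer orders and multinomials, exactly parallel to its proof of Theorem~\ref{ThAvJointJacobi}; you instead specialize the already-proved Theorem~\ref{ThAvJointJacobi} via Remark~\ref{Rem:AvInterNum} and resum over $H$ with a Vandermonde/contingency-table identity. The individual steps of your reduction are sound: the collapse condition on $H$, the forced diagonal $H_{(b,b,\omega_0)}=R''_{(b,\omega_0)}$ on the $a_3=\omega_0$ slice (which yields the compatibility $R'_{(a\omega_0)}=R''_{(a\omega_0)}$), and the identity $\sum_M\prod_{c'}\binom{R''_{(c',c)}}{M_{\omega_0,c'},\ldots,M_{\omega_{|\R|-1},c'}}=\binom{\ell_c(\bm{w})}{R'_{(\omega_0,c)},\ldots,R'_{(\omega_{|\R|-1},c)}}$ for each slice $c\neq\omega_0$.

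However, there is a genuine gap at the final assembly, and it is not mere bookkeeping. As you yourself observe, the $c=\omega_0$ slice contributes only \emph{trivial} factors equal to $1$, so the formula your argument actually produces has $\prod_{a\in\R\setminus\{\omega_0\}}$ in the numerator. The stated theorem (and the paper's own proof) instead has $\prod_{a\in\R}$, which includes the generally non-trivial factor $\binom{\ell_{\omega_0}(\bm{w})}{R'_{(\omega_0\omega_0)},\ldots,R'_{(\omega_{|\R|-1}\omega_0)}}$; that factor counts rearrangements of the entries of $\bm{u}$ on the zero set of $\bm{w}$, but those entries are pinned to $\bm{v}$ by the intersection condition, so the two expressions genuinely differ whenever those entries are not all equal. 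Concretely, for $\R=\FF_2$, $n=2$, $C=D=\FF_2^2$ and $\bm{w}=(0,0)$ one has $\Delta^{\bm{w}}(C,D)=4$; your derivation yields $4$, while the displayed formula yields $6$. So your proposal cannot end with ``this produces the stated formula'': it produces a corrected variant of it, and you must either flag the discrepancy or you have misidentified which factors survive. (A smaller point in the same direction: the surviving $R'$ in your reduction satisfy $\sum_{a}R'_{(ab)}=\ell_b(\bm{w})$ with equality, not the inequality appearing in the statement.)
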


\begin{proof}
	Let $T_{R^{\prime}}^{C}$ 
	be the set of all elements of $C$ 
	with composition $L$ of $n$,
	and $T_{R^{\prime\prime}}^{D,\bm{w}}$
	be the set of all elements of $D$ 
	with Jacobi composition 
	$R^{\prime\prime}$ of $n$ 
	with respect to 
	$\bm{w} \in \R^{n}$. 
	Then we can write
	\begin{align*}
		n! \Delta^{\bm{w}}(C,D) 
		& = 
		\sum_{\sigma \in S_{n}} 
		|C^\sigma \cap_{\bm{w}} D|\\
		& = 
		\#\{(\bm{u},\bm{v},\sigma) \in C \times D \times S_{n} 
		\mid (\bm{u}^{\bm{w}})^\sigma = \bm{v}^{\bm{w}}\}\\
		& = 
		\sum_{R^{\prime\prime},L} 
		\sum_{\bm{u} \in T_{L}^{C}} 
		\sum_{\bm{v} \in T_{R^{\prime\prime}}^{D,\bm{w}}} 
		\#
		\{
			\sigma \in S_{n} 
			\mid 
			(\bm{u}^{\bm{w}})^\sigma 
			= 
			\bm{v}^{\bm{w}}
		\}\\
		& = 
		\sum_{R^{\prime},R^{\prime\prime},L} 
		A_{L}^{C} 
		B_{R^{\prime\prime}}^{D,\bm{w}}\\
		&
		\prod_{b \in \R} %^{|\R|-1}
		L_{b}!
		\prod_{a \in \R} %^{|\R|-1}
		\dfrac{\ell_{a}(\bm{w})!}
		{R^{\prime}_{(\omega_{0}a)}!
		\cdots 
		R^{\prime}_{(\omega_{|\R|-1}a)}!}.	
	\end{align*}
	Hence this completes the proof.
\end{proof}

\subsection*{Some Numerical Examples}
Here we give some examples of the average Jacobi intersection numbers for some Type II codes over $\FF_{2}$.   

\begin{itemize}
	\item[(1)] 
	Let $e_{8}$ be the extended Hamming code, 
%	of length~$8$. 
%	Again let 	
	and
	$\bm{w} \in \FF_{2}^{8}$.
%	be the reference vector.
	\begin{itemize}
		\item[(i)] 
		If $\wt(\bm{w}) = 1$, 
		then $\Delta^{\bm{w}}(e_{8},e_{8})= 4.8$.
		
		\item[(ii)] 
		If $\wt(\bm{w}) = 2$, 
		then $\Delta^{\bm{w}}(e_{8},e_{8})= 6.4$.
		
		\item[(iii)] 
		If $\wt(\bm{w}) = 3$, 
		then $\Delta^{\bm{w}}(e_{8},e_{8})= 9.6$.	
%		\item[(iv)] {\color{red}If $\wt(\bm{w}) = 4$, 
%		then $\Delta^{\bm{w}}(e_{8},e_{8})$ is not unique
%		($15.6$ or $19.2$)}.
%		\item[(v)] If $\wt(\bm{w}) = 5$, 
%		then $\Delta^{\bm{w}}(e_{8},e_{8})= 32.0$.
%		\item[(vi)] If $\wt(\bm{w}) = 6$, 
%		then $\Delta^{\bm{w}}(e_{8},e_{8})= 64.0$.
%		\item[(vii)] If $\wt(\bm{w}) = 7$, 
%		then $\Delta^{\bm{w}}(e_{8},e_{8})= 128.0$.
%		\item[(viii)] If $\wt(\bm{w}) = 8$, 
%		then $\Delta^{\bm{w}}(e_{8},e_{8})= 256.0$.
	\end{itemize}
	\item[(2)] 
	Let 
%	$e_{8}^{2}$ 
%	be the direct sum of two copies of the extended Hamming code~$e_{8}$. 
%	Again 
%	let 
	$\bm{w}\in\FF_{2}^{16}$. 
%	be the reference vector.
	\begin{itemize}
		\item[(i)] If $\wt(\bm{w}) = 1$, 
		then 
		\begin{align*}
			\Delta^{\bm{w}}(e_{8}^{2},e_{8}^{2})
			& \approx  
			5.90769230769, \\
		 	\Delta^{\bm{w}}(d_{16}^{+},d_{16}^{+})
		 	& \approx 
		 	5.90769230769, \\ 
			\Delta^{\bm{w}}(d_{16}^{+},e_{8}^{2})
			& \approx 
			5.90769230769.
		\end{align*} 
		\item[(ii)] 
		If $\wt(\bm{w}) = 2$, 
		then 
		\begin{align*}
			\Delta^{\bm{w}}(d_{16}^{+},d_{16}^{+})
			& \approx 
			7.87692307692, \\
			\Delta^{\bm{w}}(e_{8}^{2},e_{8}^{2})
			& \approx 
			7.87692307692, \\
			\Delta^{\bm{w}}(d_{16}^{+},e_{8}^{2})
			& \approx 
			7.87692307692.
		\end{align*}
%		On the other hand, 
%		$\Delta^{\bm{w}}(d_{16}^{+},e_{8}^{2})$
%		is not unique
%		($7.38461538462$ or $7.87692307692$).
		\item[(iii)] 
		If $\wt(\bm{w}) = 3$, 
		then 
		\begin{align*}
			\Delta^{\bm{w}}(d_{16}^{+},d_{16}^{+})
			& \approx
			11.8153846154, \\
			\Delta^{\bm{w}}(e_{8}^{2},e_{8}^{2})
			& \approx 
			11.8153846154, \\
			\Delta^{\bm{w}}(d_{16}^{+},e_{8}^{2})
			& \approx 
			11.8153846154.
		\end{align*}		
%		On the other hand,
%		$\Delta^{\bm{w}}(d_{16}^{+},e_{8}^{2})$
%		is not unique
%		($11.8153846154$ or $11.7846153846$).	
%		\item[(iv)] If $\wt(\bm{w}) > 3$, 
%		then $\Delta^{\bm{w}}(d_{16}^{+},d_{16}^{+})$, 
%		$\Delta^{\bm{w}}(e_{8}^{2},e_{8}^{2})$ 
%		and $\Delta^{\bm{w}}(d_{16}^{+},e_{8}^{2})$ 
%		are not unique.
	\end{itemize}
	\item[(3)] 
	Let $g_{24}$ be the extended Golay code, 
	and 
	$\bm{w}\in \FF_{2}^{24}$.
%	be the reference vector.
	\begin{itemize}
		\item[(i)] 
		If $\wt(\bm{w}) = 1$, 
		then 
		\begin{align*}
			\Delta^{\bm{w}}(g_{24},g_{24}) 
			& \approx 
			6.02048106692, \\
			\Delta^{\bm{w}}(d_{24}^{+},d_{24}^{+})
			& \approx 
			6.08859978358, \\
			\Delta^{\bm{w}}(g_{24},d_{24}^{+})
			& \approx 
			5.94427244582.
		\end{align*}
		\item[(ii)] 
		If $\wt(\bm{w}) = 2$, 
		then 
		\begin{align*}
			\Delta^{\bm{w}}(d_{24}^{+},d_{24}^{+})
			& \approx 
			8.11813304477, \\
			\Delta^{\bm{w}}(g_{24},g_{24})
			& \approx 
			8.02730808923, \\
			\Delta^{\bm{w}}(g_{24},d_{24}^{+})
			& \approx 
			7.92569659443.
		\end{align*}
		\item[(ii)] 
		If $\wt(\bm{w}) = 3$, 
		then 
		\begin{align*}
			\Delta^{\bm{w}}(d_{24}^{+},d_{24}^{+})
			& \approx 
			12.1771995672, \\
			\Delta^{\bm{w}}(g_{24},g_{24})
			& \approx 
			12.0409962134, \\
			\Delta^{\bm{w}}(g_{24},d_{24}^{+})
			& \approx 
			11.8885448916.		
		\end{align*}
		\item[(iii)] 
		If $\wt(\bm{w}) = 4$, 
		then 
		\begin{align*}
			\Delta^{\bm{w}}(g_{24},g_{24})
			& \approx 
			20.0581090736.
%			20.0581090735889	
		\end{align*}
		\item[(iv)] 
		If $\wt(\bm{w}) = 5$, 
		then 
		\begin{align*}
			\Delta^{\bm{w}}(g_{24},g_{24})
			& \approx 
			36.0720806541.
%			36.0720806541240		
		\end{align*}
	\end{itemize}
\end{itemize}

A combinatorial $t$-$(n, k, \lambda)$ \emph{design} 
(or a $t$-design for short) 
is a pair $\D= (X,\mathcal{B})$, 
where $X$ is a set of~$n$ points, 
and $\mathcal{B}$ a collection of $k$-element subsets of $X$ 
called \emph{blocks}, with the property that 
any $t$ points are contained in precisely $\lambda$ blocks. 

Let
$X := \{1,2,\ldots,n\}$, and
$ \bm{u} = (u_1, \ldots, u_n) \in \FF_{q}^{n}$.
Then the \emph{support} of $\bm{u}$ 
is the set of indices of its nonzero coordinates: 
$\supp(\bm{u}) := \{i \mid u_{i} \neq 0\}$.
For an $\FF_{q}$-linear code $C$ of length~$n$, let us define
$\mathcal{B}(C_{w}) := \{\supp(\bm{u}) \mid \bm{u} \in C_{w}\}$,
where
$C_{w} := \{\bm{u} \in C \mid \wt(\bm{u}) = w\}$.
In general, $\mathcal{B}(C_{w})$ is a multi-set.
For a $\FF_{q}$-linear code $C$ of length~$n$,
we say that 
$C_{w}$
is a $t$-design if 
$(X,\mathcal{B}(C_{w}))$ 
is a $t$-design.
We say that an $\FF_{q}$-linear code $C$ of length~$n$ 
is~$t$-homogeneous if 
%support design of~$C$ 
for every given nonzero weight~$w$, $C_{w}$ is a~$t$-design.

Observing the values of the average Jacobi intersection numbers
of some binary Type~II codes,
%of some Type~II codes over $\FF_{2}$, 
we have the following conjectures.

\begin{conj}\label{ConjAsym}
	Let $C$ and $D$ be two Type~II codes of length $n$ over $\FF_{2}$,
	and $\bm{w}\in\FF_{2}^{n}$.
	If $C$ and $D$ are $t$-homogeneous and $\wt(\bm{w}) \leq t$,
	then
	\[
	\lim_{n\to\infty}
	\Delta^{\bm{w}}(C,D)
	=
	\begin{cases}
	6 & \mbox{ if } \wt(\bm{w}) \leq 1, \\
%	\text{ or } 1,\\
%	6 &\mbox{ if } \wt(\bm{w}) = 1,\\
	8 & \mbox{ if } \wt(\bm{w}) = 2,\\
	12 & \mbox{ if } \wt(\bm{w}) = 3\\
	20 & \mbox{ if } \wt(\bm{w}) = 4\\
	36 & \mbox{ if } \wt(\bm{w}) = 5.
	\end{cases}
	\]
\end{conj}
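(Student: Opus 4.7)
The plan is to use Theorem~\ref{ThAvJacobiInterNum} together with the $t$-design structure of $D$ to reduce $\Delta^{\bm{w}}(C,D)$ to an explicit weight-distribution sum, and then to carry out an asymptotic analysis of that sum using the Type~II divisibility constraint.

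First, since $D_{k}$ is a $t$-design for every nonzero weight~$k$ and $w:=\wt(\bm{w})\leq t$, binomial inversion applied to the standard $t$-design identities $\sum_{j\geq s}\binom{j}{s}B_{k,j}^{D,\bm{w}}=\binom{w}{s}A_{k}^{D}\binom{k}{s}/\binom{n}{s}$ forces the hypergeometric count
\[
B_{k,j}^{D,\bm{w}} \;=\; A_{k}^{D}\cdot\frac{\binom{w}{j}\binom{n-w}{k-j}}{\binom{n}{k}},
\]
where $B_{k,j}^{D,\bm{w}}$ counts codewords of $D$ of weight~$k$ with exactly $j$ ones inside $\supp(\bm{w})$. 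Substituting in Theorem~\ref{ThAvJacobiInterNum} and parametrizing by $k_{1}=\wt(\bm{u})$, $k_{2}=\wt(\bm{v})$, and the internal parameter $i$ (with $j=k_{2}-k_{1}+i$ in $[0,w]$) turns $\Delta^{\bm{w}}(C,D)$ into a double sum in $A_{k_{1}}^{C}$ and $A_{k_{2}}^{D}$ decorated with binomial factors.

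Second, set $\delta:=k_{2}-k_{1}$. Type~II forces $A_{k}^{C}=A_{k}^{D}=0$ unless $k\equiv 0\pmod 4$, so $\delta\equiv 0\pmod 4$; together with $|\delta|\leq w\leq 5$, this gives $\delta\in\{0\}$ when $w\leq 3$ and $\delta\in\{-4,0,4\}$ when $w\in\{4,5\}$. For each admissible $\delta$, Vandermonde's identity collapses the inner $i$-sum to $\binom{2w}{w-\delta}$, and the binomial ratios satisfy $\binom{n-w}{k-i}/\binom{n}{k}\to 2^{-w}$ uniformly for $k\sim n/2$. Isolating the boundary pairs $(k_{1},k_{2})\in\{(0,0),(n,n)\}$, which contribute exactly~$1$ each while other low-weight terms vanish once the minimum distance outgrows~$w$, yields the decomposition
\[
\lim_{n\to\infty}\Delta^{\bm{w}}(C,D)
\;=\; 2 \;+\; \frac{4}{2^{w}}\sum_{\substack{\delta\equiv 0\,(\mathrm{mod}\,4)\\ |\delta|\leq w}}\binom{2w}{w-\delta}.
\]
A roots-of-unity sieve on $\ZZ/4\ZZ$ evaluates the inner sum to $1$ when $w=0$ and to $2^{2w-2}+2^{w-1}$ when $w\geq 1$, so the right-hand side equals~$6$ for $w=0$ and $4+2^{w}$ for $w\geq 1$, matching the five conjectured values verbatim.

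The hard part is justifying the \emph{bulk constant}~$4$, namely that for Type~II binary codes $C,D$ of length $n\to\infty$ and every fixed $\delta\equiv 0\pmod 4$ with $|\delta|\leq w$,
\[
\lim_{n\to\infty}\sum_{k_{1}}\frac{A_{k_{1}}^{C}A_{k_{1}+\delta}^{D}}{\binom{n}{k_{1}}} \;=\; 4.
\]
This is an equidistribution assertion for the Hamming spectrum. The natural approach is to invoke Gleason's theorem---$W_{C}(x,y)$ is a polynomial in $W_{e_{8}}(x,y)=x^{8}+14x^{4}y^{4}+y^{8}$ and $\Delta_{24}(x,y)=x^{4}y^{4}(x^{4}-y^{4})^{4}$---and to show that only the leading $W_{e_{8}}^{n/8}$ contribution survives on the bulk, producing the uniform $A_{k}\sim 4|C|\binom{n}{k}/2^{n}$ on $k\equiv 0\pmod 4$, while the $\Delta_{24}$-corrections are suppressed by their zeros at $x=y$ and at $y=0$ and affect only the tails. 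The $t$-homogeneity hypothesis should supply the rigidity, via the Assmus--Mattson theorem, needed to exclude pathological sequences; making this precise and uniform in $\delta$ is the crux of the conjecture, and explains why the statement is posed as a conjecture rather than a theorem.
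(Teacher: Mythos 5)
There is nothing in the paper to compare your attempt against: the statement is posed as a conjecture, supported only by the numerical values of $\Delta^{\bm{w}}$ listed in Section~\ref{Sec:AvJacobiInterNum}, and no proof is offered. Judged on its own, your reduction is genuinely valuable. The hypergeometric splitting $B^{D,\bm{w}}_{k,j}=A_{k}^{D}\binom{w}{j}\binom{n-w}{k-j}/\binom{n}{k}$ is exactly what $\wt(\bm{w})\le t$ plus $t$-homogeneity of $D$ buys, the Vandermonde collapse of the inner sum to $\binom{2w}{w-\delta}$ is correct, and I have checked that your closed form $2+\tfrac{4}{2^{w}}\sum_{\delta\equiv 0\,(4)}\binom{2w}{w-\delta}$, equal to $6$ for $w=0$ and $4+2^{w}$ for $1\le w\le 5$, reproduces all five conjectured values $6,8,12,20,36$. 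That is an explanation of where these numbers come from which the paper does not supply.

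The gap, however, is worse than the one you flag: the ``bulk constant $4$'' is not merely hard to justify, it is \emph{false} under the stated hypotheses, so Gleason's theorem cannot rescue it. For $\delta=0$ and $w\le 1$ your bulk claim is (after adding the two boundary terms) the conjecture itself, so the reduction is circular there; and the family $d_{n}^{+}$ ($n\equiv 0\pmod 8$) is Type~II and $1$-homogeneous, yet its glue coset puts $2^{n/2-1}$ codewords at the single weight $n/2$, whence
\[
\Delta^{\bm{0}}(d_{n}^{+},d_{n}^{+})\;\ge\;\frac{\bigl(2^{n/2-1}\bigr)^{2}}{\binom{n}{n/2}}\;\sim\;\frac{1}{4}\sqrt{\frac{\pi n}{2}}\;\longrightarrow\;\infty,
\]
contradicting the conjectured limit $6$ for $\wt(\bm{w})\le 1$. (Relatedly, the paper's value $\Delta^{\bm{w}}(d_{24}^{+},d_{24}^{+})\approx 6.0886$ at $\wt(\bm{w})=1$ cannot be right: two doubly-even binary vectors agreeing off one coordinate must coincide, so this number equals $\sum_{k}A_{k}^{2}/\binom{24}{k}=6.7526$ for the standard spectrum $1,66,495,2972,495,66,1$.) The same family also defeats your boundary analysis: $A_{4}(d_{n}^{+})=\binom{n/2}{2}$ makes the $(k_{1},k_{2})=(4,4)$ term converge to $3/8$ rather than $0$, and ``minimum distance outgrows $w$'' is not available since $d_{\min}$ need not grow. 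Finally, Assmus--Mattson runs in the wrong direction (from weight enumerators to designs), so $t$-homogeneity cannot supply the spectral equidistribution $A_{k}\sim 4|C|\binom{n}{k}/2^{n}$. The honest conclusion is that you have proved the conditional statement ``if $\sum_{k}A_{k}^{C}A_{k+\delta}^{D}/\binom{n}{k}\to 4$ for each $\delta\equiv 0\pmod 4$ with $|\delta|\le w$, then the limits are $6,8,12,20,36$,'' and that this hypothesis---hence the conjecture---requires a strictly stronger assumption than $t$-homogeneity (extremality, or an explicit bulk-equidistribution condition on the weight spectrum). Identifying that missing hypothesis is the real content here, and your calculation is the right tool for locating it.
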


\begin{conj}\label{Conjdesign}
	If $C$ and $D$ are $t$-homogeneous over~$\FF_{q}$, 
	and $\wt(\bm{w})\leq t$ for $\bm{w} \in \FF_{q}^{n}$,
	then the average Jacobi intersection number
	of $C$ and $D$ with respect to $\bm{w}$ 
	can be uniquely determined.
\end{conj}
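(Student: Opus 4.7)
The plan is to show that in the expression provided by Theorem~\ref{ThAvJacobiInterNum}, the only factor capable of depending on the specific choice of $\bm{w}$ rather than on its composition $\comp(\bm{w})$ is the Jacobi composition count $B_{R''}^{D,\bm{w}}$, and then use the $t$-homogeneity of $D$ to pin that factor down in terms of $\comp(\bm{w})$ alone. Once this is accomplished, $\Delta^{\bm{w}}(C,D)$ will visibly depend only on $\comp(\bm{w})$ (and on $C,D$), which in the binary case reduces to dependence only on $\wt(\bm{w})$, matching the numerical evidence that motivated the conjecture.

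First I would apply Theorem~\ref{ThAvJacobiInterNum} and inspect the $\bm{w}$-dependence of each ingredient. The coefficients $A_L^C$ are intrinsic to $C$; the denominator multinomial depends only on $L$ and $n$; and the numerator multinomials $\prod_{a \in \R} \binom{\ell_a(\bm{w})}{R'_{(\omega_0 a)},\ldots,R'_{(\omega_{|\R|-1}a)}}$ depend on $\bm{w}$ only through the values $\ell_a(\bm{w})$, that is, through $\comp(\bm{w})$. The admissible range of the index $R'$, governed by the constraint $\sum_{a\in\R} R'_{(ab)} \le \ell_b(\bm{w})$, is likewise controlled by $\comp(\bm{w})$ alone. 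Hence the task reduces to the following key lemma: \emph{if $D$ is $t$-homogeneous and $\wt(\bm{w}) \le t$, then $B_{R''}^{D,\bm{w}}$ depends on $\bm{w}$ only through $\comp(\bm{w})$ for every Jacobi composition $R''$.}

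For $\R = \FF_2$ this lemma should follow from a standard inclusion–exclusion argument. The Jacobi composition $R''$ of a codeword $\bm{v} \in D$ with respect to $\bm{w}$ is determined by the pair $(\wt(\bm{v}),|\supp(\bm{v}) \cap \supp(\bm{w})|)$, and because each $D_w$ is a $t$-design, the number of codewords of weight $w$ meeting the fixed $s$-set $\supp(\bm{w})$ (with $s = \wt(\bm{w}) \le t$) in a prescribed number of positions can be written as a universal linear combination of the design parameters $\lambda_j$ for $j \le s \le t$. Summing over $w$ expresses $B_{R''}^{D,\bm{w}}$ as a function of $\wt(\bm{w})$ alone, and substituting back into the formula of Theorem~\ref{ThAvJacobiInterNum} yields the conjecture in the binary case.

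The main obstacle will be the extension to general $\FF_q$. For $q > 2$ the Jacobi composition distinguishes codewords by the actual nonzero symbols they place at positions where $\bm{w}$ is nonzero, so the coarse $t$-design notion (which only tracks supports) is not obviously strong enough: one must control, for a fixed $s$-subset $S \subseteq \{1,\ldots,n\}$ and a fixed symbol pattern in $(\FF_q^*)^s$ at the positions of $\bm{w}$, how many codewords of each weight carry exactly those symbols on $S$. I would attempt to close this gap either by exploiting a monomial automorphism group of $D$ that acts transitively on compatible $s$-configurations, or by showing that support-level $t$-homogeneity combined with the $\FF_q$-linearity of $D$ already forces the symbol-refined counts to be invariant under any permutation that preserves $\comp(\bm{w})$. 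If such an invariance can be established, the Jacobi count $B_{R''}^{D,\bm{w}}$ becomes a function of $\comp(\bm{w})$ alone, and the structural decomposition above then delivers the conjecture.
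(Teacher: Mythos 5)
You should first be aware that the paper contains no proof of this statement: it appears only as Conjecture~\ref{Conjdesign}, supported by the numerical values of $\Delta^{\bm{w}}$ for Type~II codes listed in Section~\ref{Sec:AvJacobiInterNum}, so there is no proof of record to compare yours against; your attempt has to stand on its own. On its own terms, your reduction is the right one, and your binary case is essentially sound. By Theorem~\ref{ThAvJacobiInterNum}, every ingredient other than $B_{R''}^{D,\bm{w}}$ --- the coefficients $A_{L}^{C}$, the multinomials, and the admissible ranges of $L$, $R'$, $R''$ --- depends on $\bm{w}$ only through $\comp(\bm{w})$, as you say. Over $\FF_{2}$ the Jacobi composition of $\bm{v}$ with respect to $\bm{w}$ is equivalent to the pair $\bigl(\wt(\bm{v}),\,|\supp(\bm{v})\cap\supp(\bm{w})|\bigr)$, and the standard intersection-number computation for a $t$-design --- the number of blocks meeting a fixed $s$-set, $s=\wt(\bm{w})\leq t$, in exactly $j$ points is $\binom{s}{j}\sum_{i\geq j}(-1)^{i-j}\binom{s-j}{i-j}\lambda_{i}$, with $\lambda_{i}\binom{n}{i}=A_{w}^{D}\binom{w}{i}$ for $i\leq t$ --- makes $B_{R''}^{D,\bm{w}}$ a function of $\wt(\bm{w})$ and the weight distribution of $D$ alone. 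This in fact yields a statement sharper than the conjecture in the binary case (the determining data are $n$, $\wt(\bm{w})$, and the two weight enumerators), and it is consistent with the paper's numerics: $e_{8}^{2}$ and $d_{16}^{+}$ share a weight enumerator and give identical values, while $g_{24}$ and $d_{24}^{+}$ do not and give different ones.

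The genuine gap is exactly where you flag it, and it is not a loose end but the entire content of the conjecture as stated: the hypothesis is over $\FF_{q}$, and for $q>2$ the quantity $B_{R''}^{D,\bm{w}}$ records, for each symbol pair $(a_{1},a_{2})\in\FF_{q}^{2}$, how many coordinates satisfy $(v_{i},w_{i})=(a_{1},a_{2})$. This is a symbol-refined statistic, whereas the paper's notion of $t$-homogeneity is defined through $\mathcal{B}(C_{w})$ and tracks supports only, so it simply does not control these counts. Neither of your proposed escape routes amounts to a proof: a monomial automorphism group acting transitively on the relevant configurations need not exist for an arbitrary $t$-homogeneous code, and you give no argument that $\FF_{q}$-linearity plus support-level homogeneity forces the refined counts to be invariant under composition-preserving permutations of $\bm{w}$. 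Already for $\bm{w}=\alpha e_{i}$ over $\FF_{3}$ your key lemma would require that the number of codewords of each fixed composition taking a prescribed value $\gamma$ at coordinate $i$ be independent of $i$, which is strictly stronger than the design hypothesis and is precisely what remains unproved. So what you have is a correct proof of the $q=2$ case together with a clearly identified but open reduction for $q>2$; as a proof of the conjecture as stated, it is incomplete.
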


Dougherty defined the $g$-fold joint weight enumerators for 
codes over Frobenius rings, and gave a generalization
of the MacWilliams relation for the weight enumerators 
in~\cite{Dougherty2017}.
In future work, 
we will give a Frobenius-code analogue 
of the results in this paper. 
Motivated by the work done by 
Miezaki and Oura in~\cite{MO2019},
Chakraborty, Miezaki and Oura 
introduced the concept of the
average complete joint cycle index
and gave a relation between 
the average complete joint cycle index and 
the average complete joint weight enumerator of codes 
in~\cite{CMOxxxx}.
We will discuss a relation between Jacobi polynomials of codes and cycle indices in the subsequent papers.

%================================================================

\section*{Acknowledgements}
The authors thank Manabu Oura for helpful discussions. 
%and contributions to this research.
The authors would also like to thank the anonymous
reviewers for their beneficial comments on an earlier version of the manuscript.
The second named author is supported by JSPS KAKENHI (18K03217).


\begin{thebibliography}{99}
%	\bibitem{BT2019}
%	A.~Bassa, and N.~Tutas, 
%	{Extending self-orthogonal codes}, {\sl Turk. J. Math.} {\bf 43} (2019), 2177-2182.	
	
	\bibitem{BMS1999}
	A.~Bonnecaze, B.~Mourrain, and P.~Sol\'e, 
	{Jacobi polynomials, Type II codes, and designs}, 
	{\sl Des. Codes Cryptogr.} 
	{\bf 16} (1999), 215--234.
	
	\bibitem{CMxxxx}
	H.S.~Chakraborty, and T.~Miezaki,
	{Average complete joint weight enumerators and self-dual codes},
	{\sl Des. Codes Cryptogr.} 
	{\bf 89} (2021), no. 6, 1241--1254.
	
	\bibitem{CMOxxxx}
	H.S.~Chakraborty, T.~Miezaki, and M.~Oura,
	{On the cycle index and the weight enumerator II},
	submitted.
			
%	\bibitem{CP1982}
%	J.~H.~Conway, and V. Pless, 
%	{On primes dividing the group order of a doubly-even (72,36,16) code and
%	the group order of a quaternary (24,12,10) code}, {\sl Discrete Math.}
%	{\bf 38} (1982), 143-156.
	
%	\bibitem{CPS1979}
%	J.~H.~Conway, and V. Pless, 
%	{Self-dual codes over $GF(3)$ and $GF(4)$ of length not exceeding $16$},
%	{\sl IEEE Transactions and Information Theory} {\bf IT-25(3)} (1979),
%	312-322.
	
%	\bibitem{CS}
%	J.~H.~Conway, and N.J.A.~Sloane, 
%	{\sl Sphere Packings Lattices and Groups},
%	third edition, Springer, New York, 1999.

	\bibitem{Dougherty2017}
	S.T.~Dougherty,
	{\sl Algebraic coding theory over finite commutative rings},
	SpringerBriefs in Mathematics.
	Springer, Cham, 2017.
	
	\bibitem{DHO}
	S.T.~Dougherty, M.~Harada, and M.~Oura, 
	{Note on the $g$-fold joint weight enumerators of self-dual codes over $\ZZ_k$}, 
	{\sl Appl. Algebra Engrg. Comm. Comput.} 
	{\bf 11} (2001), 437--445.
	
	\bibitem{HOO2020}
	K.~Honma, T.~Okabe, and M.~Oura,
	{Weight enumerator, intersection enumerator and Jacobi polynomial},
	{\sl Discrete Math.} 
	{\bf 343} (2020), no. 6, 111815, 12~pp.
%	\bibitem{HP}
%	W.~C.~Huffman, and V.~Pless, 
%	{\sl Fundamentals of Error-Correcting Codes}, 
%	first edition, Cambridge University Press, United Kingdom, 2003.
	
	\bibitem{MMC1972}
	F.J.~MacWilliams, C.L.~Mallows, and N.J.A.~Sloane, 
	{Generalizations of Gleason's theorem on weight enumerators of self-dual codes}, 
	{\sl IEEE Trans. Inform. Theory} 
	{\bf IT-18} (1972), 794--805.
	
	\bibitem{MO2019}
	T.~Miezaki, and M.~Oura,
	{On the cycle index and the weight enumerator},
	{\sl Des. Codes Cryptogr.}
	{\bf 87} (2019), no. 6, 1237--1242.
	
%	\bibitem{MOSW1978}
%	F.~J.~MacWilliams, A.~M.~Odlyzko, N.~J.~A. Sloane, and H.~N.~Ward, 
%	{Self-dual codes over GF(4)}, {\sl J. Comb. Theory} {\bf 25A} (1978), 288-318.
	
	\bibitem{Ozeki1997}
	M.~Ozeki,
	{On the notion of Jacobi polynomials for codes}, 
	{\sl Math. Proc. Camb. Phil. Soc.} {\bf 121}(1) (1997), 15--30.
	
%	\bibitem{VP1968}
%	V.~Pless, 
%	{On the uniqueness of the Golay codes}, {\sl J. Comb. Theory} {\bf 5} (1968), 215-228.
	
%	\bibitem{PP1973}
%	V.~Pless, and J.~N.~Pierce, 
%	{Self-dual codes over $GF(q)$ satisfy a modified Varshamov-Gilbert bound.}, {\sl Information and Control} {\bf 23} (1973), 35-40.
	
%	\bibitem{SC2000}
%	I.~Siap, D.~K.~Ray-Chaudhuri, 
%	{On r-fold Complete Weight Enumerator of $r$ Linear Codes}, {in: \sl Contemp. Math. American Math. Society}, {\bf 259}, (2000), 501–513.
	
	\bibitem{Y1989}
	T.~Yoshida, 
	{The average of joint weight enumerators}, 
	{\sl Hokkaido Math. J.} 
	{\bf 18} (1989), 217--222.
	
	\bibitem{Y1991}
	T.~Yoshida, 
	{The average intersection number of a pair of self-dual codes}, 
	{\sl Hokkaido Math. J.} {\bf 20} (1991), 539-548.
\end{thebibliography}
\end{document}